\documentclass[letterpaper, 10pt, conference]{ieeeconf}
\pdfoutput=1\IEEEoverridecommandlockouts \overrideIEEEmargins
\usepackage{amsmath,amssymb,url}
\usepackage{graphicx,subfigure}
\usepackage[pdfpagemode=none,pdfstartview=FitH]{hyperref}

\newcommand{\bracket}[1]{\ensuremath{\left[ #1 \right]}}
\newcommand{\braces}[1]{\ensuremath{\left\{ #1 \right\}}}
\newcommand{\parenth}[1]{\ensuremath{\left( #1 \right)}}

\newcommand{\refeqn}[1]{(\ref{eqn:#1})}
\newcommand{\reffig}[1]{Fig. \ref{fig:#1}}
\newcommand{\tr}[1]{\mathrm{tr}\ensuremath{\negthickspace\bracket{#1}}}
\newcommand{\trs}[1]{\mathrm{tr}\ensuremath{[#1]}}

\newcommand{\SO}{\ensuremath{\mathsf{SO(3)}}}
\newcommand{\T}{\ensuremath{\mathsf{T}}}
\renewcommand{\L}{\ensuremath{\mathsf{L}}}
\newcommand{\so}{\ensuremath{\mathfrak{so}(3)}}

\renewcommand{\Re}{\ensuremath{\mathbb{R}}}

\newcommand{\D}{\ensuremath{\mathbf{D}}}
\newcommand{\Sph}{\ensuremath{\mathsf{S}}}

\newcommand{\Q}{\ensuremath{\mathsf{Q}}}

\title{\LARGE \bf
Robust Adaptive Geometric Tracking Controls on \SO\\with an Application to the Attitude Dynamics of a Quadrotor UAV}

\author{Taeyoung Lee\authorrefmark{1}%
\thanks{Taeyoung Lee, Mechanical and Aerospace Engineering, Florida Institute of Technology, Melbourne, FL 39201 {\tt taeyoung@fit.edu}}%
\thanks{\textsuperscript{\footnotesize\ensuremath{*}}This research has been supported in part by NSF under grants CMMI-1029551.}
}

\newtheorem{definition}{Definition}

\newtheorem{prop}[definition]{Proposition}
\newtheorem{assump}[definition]{Assumption}
\newtheorem{remark}[definition]{Remark}

\begin{document}
\allowdisplaybreaks
\maketitle \thispagestyle{empty} \pagestyle{empty}

\begin{abstract}
This paper provides new results for a robust adaptive tracking control of the attitude dynamics of a rigid body. Both of the attitude dynamics and the proposed control system are globally expressed on the special orthogonal group, to avoid complexities and ambiguities associated with other attitude representations such as Euler angles or quaternions. By designing an adaptive law for the inertia matrix of a rigid body, the proposed control system can asymptotically follow an attitude command without the knowledge of the inertia matrix, and it is extended to guarantee boundedness of tracking errors in the presence of unstructured disturbances. These are illustrated by numerical examples and experiments for the attitude dynamics of a quadrotor UAV.
\end{abstract}

\section{Introduction}

The attitude dynamics of a rigid body appears in various engineering applications, such as aerial and underwater vehicles, robotics, and spacecraft, and the attitude control problem has been extensively studied under various assumptions (see, for example, \cite{WieWeiJGCD89,WenKreITAC91,Sid97,Hug86}).

One of the distinct features of the attitude dynamics is that its configuration manifold is not linear: it evolves on a nonlinear manifold, referred as the special orthogonal group, $\SO$. This yields important and unique properties that cannot be observed from dynamic systems evolving on a linear space. For example, it has been shown that there exists no continuous feedback control system that asymptotically stabilizes an attitude globally on $\SO$~\cite{KodPICDC98,BhaBerSCL00}. 

However, most of the prior work on the attitude control is based on minimal representations of an attitude, or quaternions. It is well known that any minimal attitude representations are defined only locally, and they exhibit kinematic singularities for large angle rotational maneuvers. Quaternions do not have singularities, but they have ambiguities in representing an attitude, as the three-sphere $\Sph^3$ double-covers $\SO$. As a result, in a quaternion-based attitude control system, convergence to a single attitude implies convergence to either of the two disconnected, antipodal points on $\Sph^3$~\cite{MaySanITAC11}. Therefore, depending on the particular choice of control inputs, a quaternion-based control system may become discontinuous when applied to an actual attitude dynamics~\cite{MaySanPACC11b}, and it may also exhibit unwinding behavior, where the controller unnecessarily rotates a rigid body through large angles~\cite{BhaBerSCL00,MaySanPACC11}.

Geometric control is concerned with the development of control systems for dynamic systems evolving on nonlinear manifolds that cannot be globally identified with Euclidean spaces~\cite{Jur97,Blo03,BulLew05}. By characterizing geometric properties of nonlinear manifolds intrinsically, geometric control techniques completely avoids singularities and ambiguities that are associated with local coordinates or improper characterizations of a configuration manifold. This approach has been applied to fully actuated rigid body dynamics on Lie groups to achieve almost global asymptotic stability~\cite{BulLew05,MaiBerITAC06,CabCunPICDC08,ChaMcCITAC09,SanFosJGCD09,LeeLeoPICDC10}.

In this paper, we develop a geometric adaptive controller on $\SO$ to track an attitude and angular velocity command without the knowledge of the inertia matrix of a rigid body. An estimate of the inertia matrix is updated online to provide an asymptotic tracking property. It is also extended to a robust adaptive attitude tracking control system. Stable adaptive control schemes designed without consideration of uncertainties may become unstable in the presence of small disturbances~\cite{IoaSun96}. The presented robust adaptive scheme guarantees the boundedness of the attitude tracking error and the inertia matrix estimation error even if there exist modeling errors or disturbances. Compared with a prior work in~\cite{SanFosJGCD09}, the proposed adaptive tracking control system has simpler structures, and the proposed robust adaptive tracking control system can be applied to unstructured or non-harmonic uncertainties without need for their frequencies.

This paper is organized as follows. We present a global attitude dynamics model in Section \ref{sec:AD}. Adaptive attitude tracking control systems on $\SO$ are developed in Section III, followed by numerical and experimental results.

\section{Attitude Dynamics of a Rigid Body}\label{sec:AD}

We consider the rotational attitude dynamics of a fully-actuated rigid body. We define an inertial reference frame and a body fixed frame whose origin is located at the mass center of the rigid body. The configuration of the rigid body is the orientation of the body fixed frame with respect to the inertial frame, and it is represented by a rotation matrix $R\in\SO$, where the special orthogonal group $\SO$ is the group of $3\times 3$ orthogonal matrices with determinant of one, i.e., $\SO=\{R\in\Re^{3\times 3}\,|\,R^T R=I,\,\det{R}=1\}$.

The equations of motion are given by
\begin{gather}
J\dot \Omega + \Omega\times J\Omega = u + \Delta,\label{eqn:Wdot}\\
\dot R = R\hat\Omega,\label{eqn:Rdot}
\end{gather}
where $J\in\Re^{3\times 3}$ is the inertia matrix in the body fixed frame, and $\Omega\in\Re^3$ and $u\in\Re^3$ are the angular velocity of the rigid body and the control moment, represented with respect to the body fixed frame, respectively. The vector $\Delta\in\Re^3$ represents disturbances caused by either modeling errors or system noises.

The \textit{hat} map $\wedge :\Re^{3}\rightarrow\so$ transforms a vector in $\Re^3$ to a $3\times 3$ skew-symmetric matrix such that $\hat x y = x\times y$ for any $x,y\in\Re^3$. The inverse of the hat map is denoted by the \textit{vee} map $\vee:\so\rightarrow\Re^3$.
Several properties of the hat map are summarized as follows:
\begin{gather}
    \hat x y = x\times y = - y\times x = - \hat y x,\\
    \tr{A\hat x }=\frac{1}{2}\tr{\hat x (A-A^T)}=-x^T (A-A^T)^\vee,\label{eqn:hat1}\\
    \hat x  A+A^T\hat x=(\braces{\tr{A}I_{3\times 3}-A}x)^{\wedge},\label{eqn:xAAx}\\
R\hat x R^T = (Rx)^\wedge,\label{eqn:RxR}
\end{gather}
for any $x,y\in\Re^3$, $A\in\Re^{3\times 3}$, and $R\in\SO$. Throughout this paper, the 2-norm of a matrix $A$ is denoted by $\|A\|$, and its Frobenius norm is denoted by $\|A\|_F = \sqrt{\trs{A^TA}}$. We have $\|A\|\leq \|A\|_F\leq \sqrt{r}\|A\|$, where $r$ is the rank of $A$.

\section{Geometric Tracking Control on $\SO$}

We develop adaptive control systems to follow a given smooth attitude command $R_d(t)\in\SO$. The kinematics equation for the attitude command can be written as
\begin{align}
\dot R_d = R_d \hat\Omega_d,\label{eqn:Rddot}
\end{align}
where $\Omega_d\in\Re^3$ is the desired angular velocity. 

\subsection{Attitude Error Dynamics}

One of the important procedures in constructing a control system on a nonlinear manifold $\Q$ is choosing a proper configuration error function, which is a smooth positive definite function $\Psi:\Q\times\Q\rightarrow\Re$ that measures the error between a current configuration and a desired configuration. Once a configuration error function is chosen, a configuration error vector, and a velocity error vector can be defined in the tangent space $\T_q\Q$ by using the derivatives of $\Psi$~\cite{BulLew05}. Then, the remaining procedure is similar to nonlinear control system design in Euclidean spaces: control inputs are carefully designed as a function of these error vectors through a Lyapunov analysis on $\Q$.

The following form of a configuration error function has been used in~\cite{BulLew05,ChaMcCITAC09}. Here, we summarize its properties developed in those literatures, and we show few additional facts required in this paper.

\begin{prop}\label{prop:1}
For a given tracking command $(R_d,\Omega_d)$, and the current attitude and angular velocity $(R,\Omega)$, we define an attitude error function $\Psi:\SO\times\SO\rightarrow\Re$, an attitude error vector $e_R:\SO\times\SO\rightarrow\Re^3$, and an angular velocity error vector $e_\Omega:\SO\times\Re^3\times\SO\times\Re^3\rightarrow \Re^3$ as follows:
\begin{gather}
\Psi (R,R_d) = \frac{1}{2}\tr{G(I-R_d^TR)},\label{eqn:Psi}\\
e_R(R,R_d) =\frac{1}{2} (GR_d^TR-R^TR_dG)^\vee,\label{eqn:eR}\\
e_\Omega(R,\Omega,R_d,\Omega) = \Omega - R^T R_d\Omega_d,\label{eqn:eW}
\end{gather}
where the matrix $G\in\Re^{3\times 3}$ is given by $G=\mathrm{diag}[g_1,g_2,g_3]$ for distinct, positive constants $g_1,g_2,g_3\in\Re$. Then, the following statements hold:
\begin{itemize}
\item[(i)] $\Psi$ is locally positive definite about $R=R_d$.
\item[(ii)] the left-trivialized derivative of $\Psi$ is given by
\begin{align}
\T^*_I \L_R\, (\D_R\Psi(R,R_d))= e_R.
\end{align}
\item[(iii)] the critical points of $\Psi$, where $e_R=0$, are $\{R_d\}\cup\{R_d\exp (\pi \hat s)\}$ for $s\in\{e_1,e_2,e_3\}$.
\item[(iv)] a lower bound of $\Psi$ is given as follows:
\begin{align}
b_1\|e_R(R,R_d)\|^2 \leq \Psi(R,R_d),\label{eqn:eRPsi}
\end{align}
where the constant $b_1$ is given by $b_1=\frac{h_1}{h_2+h_3}$ for 
\begin{align*}
h_1&= \min\{g_1+g_2,g_2+g_3,g_3+g_1 \},\\
h_2&=\max\{(g_1-g_2)^2,(g_2-g_3)^2,(g_3-g_1)^2\},\\
h_3&=\max\{(g_1+g_2)^2,(g_2+g_3)^2,(g_3+g_1)^2\}.
\end{align*}
\item[(v)] Let $\psi$ be a positive constant that is strictly less than $h_1$. If $\Psi(R,R_d)< \psi<h_1$, then an upper bound of $\Psi$ is given by
\begin{align}
\Psi(R,R_d)\leq b_2 \|e_R(R,R_d)\|^2,\label{eqn:eRPsi2}
\end{align}
where the constant $b_2$ is given by $b_2=\frac{h_1h_4}{h_5(h_1-\psi)}$ for 
\begin{align*}
h_4 & = \max\{g_1+g_2,g_2+g_3,g_3+g_1\}\\
h_5 & = \min\{(g_1+g_2)^2,(g_2+g_3)^2,(g_3+g_1)^2\}.
\end{align*}
\end{itemize}
\end{prop}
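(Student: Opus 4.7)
The five items split into three short computations and two quantitative bounds. For (i), I would parametrize $R = R_d\exp(\hat\eta)$ and Taylor-expand: since $\hat\eta$ is skew and $G$ is symmetric, the linear term in $\eta$ vanishes, and the quadratic term equals $\tfrac{1}{4}\bigl((g_2+g_3)\eta_1^2 + (g_1+g_3)\eta_2^2 + (g_1+g_2)\eta_3^2\bigr)$, which is positive definite. For (ii), a direct differentiation gives $\D_R\Psi \cdot R\hat\eta = -\tfrac{1}{2}\tr{GR_d^T R\, \hat\eta}$; identity \refeqn{hat1} with $A = GR_d^T R$ rewrites this as $\eta^T e_R$, which identifies the left-trivialized derivative. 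For (iii), $e_R = 0$ is equivalent to $G R_d^T R$ being symmetric; for $Q = R_d^T R \in \SO$ and $G$ diagonal with distinct positive eigenvalues, the symmetry condition forces $Q$ to be diagonal in the eigenbasis of $G$ with diagonal entries $\pm 1$ and determinant one, giving exactly $Q \in \{I,\exp(\pi\hat e_1),\exp(\pi\hat e_2),\exp(\pi\hat e_3)\}$.

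The heart of the proof is (iv) and (v). My plan is to parametrize $Q = R_d^T R$ by its unit quaternion $(q_0, q)\in\S^3$ via $Q = I + 2q_0\hat q + 2\hat q^2$ with $q_0^2 + \|q\|^2 = 1$. A direct expansion then yields the explicit formulas
\begin{align*}
\Psi &= (g_2+g_3)q_1^2 + (g_1+g_3)q_2^2 + (g_1+g_2)q_3^2,\\
e_R^i &= (g_j+g_k)\,q_0 q_i + (g_k-g_j)\,q_j q_k,
\end{align*}
where $(i,j,k)$ is any cyclic permutation of $(1,2,3)$. When the three squared components of $e_R$ are added, the cross terms collapse to $2q_0 q_1 q_2 q_3\sum_{\mathrm{cyc}}(g_k^2 - g_j^2) = 0$, leaving the clean identity
\begin{align*}
\|e_R\|^2 \;=\; q_0^2 \sum_{\mathrm{cyc}}(g_j+g_k)^2\, q_i^2 \;+\; \sum_{\mathrm{cyc}}(g_k-g_j)^2\,q_j^2 q_k^2.
\end{align*}

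From this identity both bounds reduce to elementary inequalities on $\S^3$. For (iv), I would bound the first sum by $h_3 q_0^2\|q\|^2$ and the second by $h_2\|q\|^2$ (using $q_j^2 q_k^2 \le q_j^2$ since $|q_k|\le 1$), giving $\|e_R\|^2 \le (h_2 + h_3)\|q\|^2$; combined with $\Psi \ge h_1\|q\|^2$ (read off the formula for $\Psi$), this yields $b_1 = h_1/(h_2+h_3)$. For (v), discard the nonnegative second sum to obtain $\|e_R\|^2 \ge h_5\, q_0^2\|q\|^2$; the hypothesis $\Psi < \psi < h_1$ together with $\Psi \ge h_1\|q\|^2$ forces $\|q\|^2 < \psi/h_1$ and hence $q_0^2 > (h_1-\psi)/h_1$, while $\Psi \le h_4\|q\|^2$ gives $\|q\|^2 \ge \Psi/h_4$. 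Multiplying produces exactly $b_2 = h_1 h_4 / \bigl(h_5(h_1-\psi)\bigr)$. The one nontrivial step that I expect to be the main obstacle is verifying the cyclic cancellation of the cross terms in $\|e_R\|^2$; once that identity is in hand, everything else is just repeated application of the trivial bounds $q_0^2 \le 1$, $q_i^2 \le \|q\|^2 \le 1$, and $\Psi \ge h_1\|q\|^2$.
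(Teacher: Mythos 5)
Your proposal is correct, but it takes a genuinely different route from the paper on the quantitative items (iv)--(v), and it also supplies direct arguments for (i)--(iii), which the paper simply cites from the literature. The paper parametrizes $Q=R_d^TR=\exp\hat x$ via Rodrigues' formula and obtains (with symbolic computation) trigonometric expressions for $\Psi$ and $\|e_R\|^2$ in terms of $\|x\|$; the bounds then come from estimating the ratios $\|e_R\|^2/\Psi$ and $\Psi/\|e_R\|^2$ using $1\pm\cos\|x\|$, and for (v) the hypothesis $\Psi<\psi<h_1$ is converted into the lower bound $h_1-\psi<\tfrac{h_1}{2}(1+\cos\|x\|)$. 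You instead use the unit-quaternion parametrization $Q=I+2q_0\hat q+2\hat q^2$, and your closed forms check out: $\Psi=\sum_{\mathrm{cyc}}(g_j+g_k)q_i^2$ follows from $\hat q^2=qq^T-\|q\|^2I$, the two pieces of $e_R$ come from \refeqn{xAAx} and $Gqq^T-qq^TG=(q\times Gq)^\wedge$, and the cross terms in $\|e_R\|^2$ do cancel because $\sum_{\mathrm{cyc}}(g_k^2-g_j^2)=0$; with $q_0^2=1-\|q\|^2$ the two bounds then follow exactly as you say, reproducing $b_1=h_1/(h_2+h_3)$ and $b_2=h_1h_4/(h_5(h_1-\psi))$ (your route even handles the exclusion of the antipodal critical points automatically, since $\Psi<\psi<h_1$ forces $q_0^2>(h_1-\psi)/h_1>0$, whereas the paper argues this separately). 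What your approach buys is a fully elementary, hand-checkable polynomial identity on $\Sph^3$ in place of the paper's Matlab-assisted trigonometric formulas, at the modest cost of introducing the quaternion double cover, which the paper otherwise deliberately avoids as a representation (harmless here, since it is used only as a computational chart for $Q$, not for control design). Your sketches of (i)--(iii) are also sound: the quadratic term $\tfrac14\sum_{\mathrm{cyc}}(g_j+g_k)\eta_i^2$ gives local positive definiteness, the computation via \refeqn{hat1} gives (ii), and in (iii) the symmetry condition $GQ=Q^TG$ with $G$ having distinct diagonal entries indeed forces $Q$ diagonal with entries $\pm1$ and determinant one; if you write this up, spell out the commutation argument (e.g.\ $M=GQ$ symmetric and $MG^{-2}M=I$ imply $M$ commutes with $G^{-2}$, hence is diagonal) rather than asserting it.
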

\begin{proof}
The proofs of (i)-(iii) are available at~\cite[(Chap. 11)]{BulLew05}. To show (iv) and (v), let $Q=R_d^TR=\exp\hat x\in\SO$ for $x\in\Re^3$ from Rodrigues' formula. Using the Matlab Symbolic Computation Tool, we find 
\begin{align*}
\Psi & = \frac{1-\cos\|x\|}{2\|x\|^2}\sum_{(i,j,k)\in\mathcal{C}} (g_i+g_j)x_k^2,\\
\|e_R\|^2 & = \frac{(1-\cos\|x\|)^2}{4\|x\|^4}\sum_{(i,j,k)\in\mathcal{C}} (g_i-g_j)^2 x_i^2 x_j^2\\
&\quad + \frac{\sin^2\|x\|}{4\|x\|^2}\sum_{(i,j,k)\in\mathcal{C}} (g_i+g_j)^2 x_k^2,
\end{align*}
where $\mathcal{C}=\{(1,2,3),(2,3,1),(3,1,2)\}$. When $\Psi=0$, \refeqn{eRPsi} is trivial. Assuming that $\Psi\neq 0$, therefore $\|x\|\neq 0$, an upper bound of $\frac{\|e_R\|^2}{\Psi}$ is given by
\begin{align*}
\frac{\|e_R\|^2}{\Psi}& \leq \frac{1}{2h_1}(1-\cos\|x\|)h_2+\frac{1}{2h_1}(1+\cos\|x\|)h_3\\
&\leq \frac{h_2+h_3}{h_1},
\end{align*}
which shows \refeqn{eRPsi}.

Next, we consider (v). When $\Psi=0$, \refeqn{eRPsi2} is trivial. Hereafter, we assume $\Psi\neq 0$, therefore $R\neq R_d$. At the three remaining critical points of $\Psi$, the values of $\Psi$ are given by $g_1+g_2$, $g_2+g_3$, or $g_3+g_1$. So, from the given bound $\Psi<\psi$, these three critical points are avoided, and we can guarantee that $e_R\neq 0$ and $\|x\|<\pi$. An upper bound of $\frac{\Psi}{\|e_R\|^2}$ is given by
\begin{align}
\frac{\Psi}{\|e_R\|^2}&\leq \frac{2(1-\cos\|x\|)}{\sin\|x\|^2}\frac{\sum_{\mathcal{C}} (g_i+g_j)x_k^2/\|x\|^2}{\sum_{\mathcal{C}} (g_i+g_j)^2x_k^2/\|x\|^2}\nonumber\\
&\leq \frac{2}{1+\cos\|x\|}\frac{h_4}{h_5},\label{eqn:ratio0}
\end{align}
Also, an upper bound of $h_1-\psi$ is given by
\begin{align*}
h_1-\psi < h_1-\Psi \leq h_1 - \frac{1-\cos\|x\|}{2}h_1=\frac{h_1}{2}(1+\cos\|x\|).
\end{align*}
Substituting this into \refeqn{ratio0}, we have
\begin{align*}
\frac{\Psi}{\|e_R\|^2}&\leq \frac{h_4h_1}{h_5(h_1-\psi)}=b_2,
\end{align*}
which shows \refeqn{eRPsi2}.
\end{proof}

The corresponding attitude error dynamics for the attitude error function $\Psi$, the attitude error vector $e_R$, and the angular velocity error $e_\Omega$ are as follows.
\begin{prop}\label{prop:2}
The error dynamics for $\Psi$, $e_R$, $e_\Omega$ satisfies
\begin{gather}
\frac{d}{dt} (R_d^T R) =  R_d^T R \hat e_\Omega\label{eqn:RdTRdot}\\
\frac{d}{dt}(\Psi(R,R_d))  = e_R\cdot e_\Omega,\label{eqn:Psidot}\\
\dot e_R  = E(R,R_d)e_\Omega,\label{eqn:eRdotE}\\
\dot e_\Omega  = J^{-1}(-\Omega\times J\Omega + u+\Delta)- \alpha_d,\label{eqn:eWdot}
\end{gather}
where the matrix $E(R,R_d)\in\Re^{3\times 3}$, and the angular acceleration $\alpha_d\in\Re^3$, that is caused by the attitude command, and measured in the body fixed frame, are given by
\begin{align}
E(R,R_d)&=\frac{1}{2}(\trs{R^T R_dG}I -R^T R_dG),\label{eqn:E}\\
\alpha_d&=-\hat\Omega R^T R_d\Omega_d+ R^T R_d{\dot \Omega}_d.\label{eqn:ad}
\end{align}
Furthermore, the matrix $E(R,R_d)$ is bounded by
\begin{align}
\|E(R,R_d)\| \leq \frac{1}{\sqrt{2}}\tr{G}.\label{eqn:Eb}
\end{align}
\end{prop}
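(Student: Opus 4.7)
The plan is to verify each of the five claims in sequence, since they are largely independent calculations built from the hat-map identities \eqref{eqn:hat1}--\eqref{eqn:RxR} collected in Section II and the kinematic equations \eqref{eqn:Wdot}, \eqref{eqn:Rdot}, \eqref{eqn:Rddot}.

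For \eqref{eqn:RdTRdot}, I would differentiate $R_d^T R$ directly using $\dot R = R\hat\Omega$ and $\dot R_d = R_d\hat\Omega_d$, obtaining $-\hat\Omega_d R_d^T R + R_d^T R\hat\Omega$. Then I would invoke \eqref{eqn:RxR} in the form $(R^T R_d\Omega_d)^\wedge = R^T R_d\,\hat\Omega_d\, R_d^T R$ to absorb the first term, which collapses the expression to $R_d^T R(\hat\Omega - (R^T R_d\Omega_d)^\wedge) = R_d^T R\,\hat e_\Omega$. For \eqref{eqn:Psidot}, I would apply $\frac{d}{dt}\Psi = -\tfrac12 \tr{G\,\tfrac{d}{dt}(R_d^T R)} = -\tfrac12\tr{G R_d^T R\,\hat e_\Omega}$, and then use \eqref{eqn:hat1} with $A=GR_d^T R$ so that $\tr{A\hat e_\Omega} = -e_\Omega^T(A-A^T)^\vee = -2 e_\Omega^T e_R$, yielding $e_R\cdot e_\Omega$.

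For \eqref{eqn:eRdotE}, I would write $\hat e_R = \tfrac12(GR_d^T R - R^T R_d G)$ (noting that the argument is skew-symmetric), differentiate using \eqref{eqn:RdTRdot} and its transpose $\tfrac{d}{dt}(R^T R_d) = -\hat e_\Omega R^T R_d$, and collect the two resulting terms into $G R_d^T R\,\hat e_\Omega + \hat e_\Omega R^T R_d G$. Applying \eqref{eqn:xAAx} with $A = R^T R_d G$ (so $A^T = GR_d^T R$) converts this into $(\{\trs{R^T R_d G}I - R^T R_d G\}e_\Omega)^\wedge$, after which taking the vee map produces exactly $2 E(R,R_d)e_\Omega$. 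For \eqref{eqn:eWdot}, I would differentiate $e_\Omega = \Omega - R^T R_d\Omega_d$, substitute $\tfrac{d}{dt}(R^T R_d) = -\hat e_\Omega R^T R_d$, and simplify via $\hat e_\Omega R^T R_d\Omega_d = \hat\Omega R^T R_d\Omega_d$ (since the component $R^T R_d\Omega_d \times R^T R_d\Omega_d$ vanishes), then insert $\dot\Omega$ from \eqref{eqn:Wdot}.

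The last and main obstacle is the bound \eqref{eqn:Eb}, since naive triangle-inequality bounds on $\|E\|$ yield only $\|E\|\le\tr{G}$, losing the factor $1/\sqrt 2$. My plan is to estimate the Frobenius norm directly: setting $Q = R^T R_d\in\SO$ and expanding,
\begin{align*}
\|E\|_F^2 = \tfrac14\trs{(\trs{QG}I - QG)^T(\trs{QG}I - QG)} = \tfrac14\bigl(\trs{QG}^2 + \trs{G^2}\bigr),
\end{align*}
where the cross terms collapse by $Q^T Q = I$. Since the diagonal entries of $Q\in\SO$ lie in $[-1,1]$, we get $|\trs{QG}|\le \tr{G}$, and $\trs{G^2} = \sum g_i^2 \le (\sum g_i)^2 = \tr{G}^2$. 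This gives $\|E\|_F^2 \le \tfrac12\tr{G}^2$, and the desired bound follows from $\|E\|\le\|E\|_F$.
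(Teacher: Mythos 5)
Your derivations of \refeqn{RdTRdot}--\refeqn{eWdot} are correct and essentially the same as the paper's: the same uses of \refeqn{RxR}, \refeqn{hat1} and \refeqn{xAAx}, with only a cosmetic difference in \refeqn{eWdot} (you route everything through $\tfrac{d}{dt}(R^TR_d)=-\hat e_\Omega R^TR_d$ and note that the $(R^TR_d\Omega_d)\times(R^TR_d\Omega_d)$ term vanishes, whereas the paper differentiates directly and uses $\hat\Omega_d\Omega_d=0$; these are equivalent). Where you genuinely depart from the paper is the bound \refeqn{Eb}. Both arguments compute the same Frobenius norm, $\|E\|_F^2=\tfrac14\parenth{\trs{QG}^2+\trs{G^2}}$, and finish with $\trs{G^2}\le\trs{G}^2$ and $\|E\|\le\|E\|_F$; but the paper controls $\trs{R^TR_dG}$ by writing $Q=\exp\hat x$ via Rodrigues' formula and evaluating the trace symbolically, while you simply observe that the diagonal entries of a rotation matrix satisfy $|Q_{ii}|\le 1$, so $|\trs{QG}|=|\sum_i g_iQ_{ii}|\le\sum_i g_i=\trs{G}$ since $g_i>0$. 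Your route is more elementary and coordinate-free, avoiding the exponential parametrization and symbolic computation entirely, and it yields exactly the same constant; the paper's parametrized expression has the side benefit of showing how the bound depends on the rotation angle, but that is not needed for \refeqn{Eb}. One small wording correction: in your Frobenius-norm expansion it is the quadratic term $GQ^TQG=G^2$ that simplifies by $Q^TQ=I$, while the two cross terms reduce to $-2\trs{QG}^2$ by linearity and cyclicity of the trace and combine with the $3\trs{QG}^2$ from the identity part; your final formula is nonetheless correct.
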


\begin{proof}
From the kinematics equations \refeqn{Rdot}, \refeqn{Rddot}, the time-derivative of $R_d^T R$ is given by
\begin{align*}
\frac{d}{dt}(R_d^T R) = -\hat\Omega_dR_d^T R + R_d^T R\hat\Omega.
\end{align*}
Using \refeqn{RxR}, this can be written as
\begin{align*}
\frac{d}{dt}(R_d^T R) = R_d^T R(-(R^T R_d\Omega_d)^\wedge + \hat\Omega),
\end{align*}
which shows \refeqn{RdTRdot}. From this, the time-derivative of the attitude error function is given by
\begin{align*}
\frac{d}{dt} \Psi (R,R_d) &  = -\frac{1}{2}\tr{GR_d^T R\hat e_\Omega}
\end{align*}
Applying \refeqn{hat1}, \refeqn{eR} into this, we obtain \refeqn{Psidot}. Next, the time-derivative of the attitude error vector is given by
\begin{align*}
\dot e_R = \frac{1}{2}(G R_d^T R \hat e_\Omega + \hat e_\Omega R^T R_d G)^\vee.
\end{align*}
Using the properties of the hat map, given by \refeqn{xAAx}, this can be further reduced to \refeqn{eRdotE} and \refeqn{E}. 

To show \refeqn{Eb}, we find the Frobenius norm $\|E\|_F$:
\begin{align}
\|E(R,R_d)\|_F = \sqrt{\tr{E^TE}}= \frac{1}{2}\sqrt{\trs{G^2} + \tr{R^TR_d G}^2},\label{eqn:EF0}
\end{align}
where we use the facts that $\trs{AB}=\trs{BA}$ and $\trs{cA}=c\trs{A}$ for any matrices $A,B$, and a constant $c$. Let $Q=R_d^TR=\exp\hat x\in\SO$ for $x\in\Re^3$ from Rodrigues' formula. Using the Matlab Symbolic Computation Tool, we find 
\begin{align*}
\tr{R^TR_d G} = \cos\|x\| \sum_{i=1}^3 g_i(1-\frac{x_i^2}{\|x\|^2}) + \sum_{i=1}^3 g_i\frac{x_i^2}{\|x\|^2},
\end{align*}
 Since $0\leq x_i^2/\|x\|^2\leq 1$, we have $\trs{R^TR_d G}\leq \sum_{i=1}^3 g_i=\trs{G}$. Substituting this into \refeqn{EF0}, we obtain
\begin{align*}
\|E(R,R_d)\|_F^2 \leq \frac{1}{4}(\tr{G^2}+\tr{G}^2) \leq \frac{1}{2}\tr{G}^2,
\end{align*}
which shows \refeqn{Eb}, since $\|E\|\leq \|E\|_F$. 

From \refeqn{Wdot}, \refeqn{Rdot}, \refeqn{Rddot}, and using the fact that $\hat\Omega_d\Omega_d=\Omega_d\times\Omega_d=0$ for any $\Omega_d\in\Re^3$, the time-derivative of the angular velocity error $e_\Omega$ is given by
\begin{align*}
\dot e_\Omega & = \dot \Omega +\hat\Omega R^T R_d\Omega_d-R^T R_d\hat\Omega_d\Omega _d - R^T R_d{\dot \Omega}_d\nonumber\\
& =J^{-1}(-\Omega\times J\Omega + u+\Delta)+\hat\Omega R^T R_d\Omega_d- R^T R_d{\dot \Omega}_d,\nonumber\\
& =J^{-1}(-\Omega\times J\Omega + u+\Delta)-\alpha_d,
\end{align*}
where $\alpha_d$ is given by \refeqn{ad}.
\end{proof}

\subsection{Adaptive Attitude Tracking}

Attitude tracking control systems require the knowledge of an inertia matrix when the given attitude command is not fixed. But, it is difficult to measure the value of an inertia matrix exactly. In general, there is an estimation error, given by
\begin{align}
\tilde J = J - \bar J,\label{eqn:tildeJ}
\end{align}
where the exact inertia matrix and its estimate are denoted by the matrices $J$ and $\bar J\in\Re^{3\times 3}$, respectively. All of matrices, $J$, $\bar J$, $\tilde J$ are symmetric.

Here, an adaptive tracking controller for the attitude dynamics of a rigid body is presented to follow a given attitude command without the knowledge of its inertia matrix assuming that there is no disturbance, and that the bounds of the inertia matrix are given.

\begin{assump}\label{assump:J}
The minimum eigenvalue $\lambda_m\in\Re$, and the maximum eigenvalue $\lambda_M\in\Re$ of the true inertia matrix $J$ given at \refeqn{Wdot} are known.
\end{assump}

\begin{prop}\label{prop:AT}
Assume that there is no disturbance in the attitude dynamics, i.e. $\Delta = 0$ at \refeqn{Wdot}, and Assumption \ref{assump:J} is satisfied. For a given attitude command $R_d(t)$, and positive constants $k_R,k_\Omega,k_J\in\Re$, we define a control input $u\in\Re^3$, and an update law for $\bar J$ as follows:
\begin{align}
u & = -k_R e_R - k_\Omega e_\Omega + \Omega\times \bar{J}\Omega +\bar{J} \alpha_d,\label{eqn:u}\\
\dot{\bar J} & = \frac{k_J}{2} (-\alpha_d e_A^T - e_A\alpha_d^T +\Omega\Omega^T \hat e_A -\hat e_A \Omega\Omega^T),\label{eqn:Jdot}
\end{align}
where $e_A\in\Re^3$ is an augmented error vector given by
\begin{align}
e_A=e_\Omega + c e_R\label{eqn:eA}
\end{align}
for a positive constant $c$ satisfying
\begin{align}
c < \min\braces{
\sqrt{\frac{2b_1k_R\lambda_{m}}{\lambda_M^2}},\,
\frac{\sqrt{2}k_\Omega}{\lambda_M\trs{G}},\,
\frac{4 k_R k_\Omega }{k_\Omega^2 + \frac{1}{\sqrt 2} k_R \lambda_M\trs{G}}}.\label{eqn:c0}
\end{align}
Then, the zero equilibrium of the tracking errors $(e_R,e_\Omega)$ and the estimation error $\tilde J$ is stable, and those errors are uniformly bounded. Furthermore, the tracking errors for the attitude and the angular velocity asymptotically converge to zero, i.e. $e_R,e_\Omega\rightarrow 0$ as $t\rightarrow\infty$.
\end{prop}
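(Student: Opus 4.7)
The plan is a Lyapunov analysis on $\SO \times \Re^3 \times \Re^{3\times 3}$ using the augmented error $e_A = e_\Omega + c e_R$, chosen so that the update law \refeqn{Jdot} exactly cancels the $\tilde J$-dependent terms in $\dot V$. First I substitute \refeqn{u} into \refeqn{eWdot} and use $\tilde J = J - \bar J$ to obtain the closed-loop angular velocity error dynamics
\begin{align*}
J\dot e_\Omega = -k_R e_R - k_\Omega e_\Omega - \tilde J \alpha_d - \Omega \times \tilde J \Omega,
\end{align*}
in which the parameter mismatch enters linearly in $\tilde J$.

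Next I introduce the composite Lyapunov function
\begin{align*}
V = \tfrac{1}{2}\, e_\Omega \cdot J e_\Omega + k_R \Psi(R,R_d) + c\, e_R \cdot J e_\Omega + \tfrac{1}{2k_J}\trs{\tilde J^2}.
\end{align*}
The $\trs{\tilde J^2}$ piece is manifestly nonnegative; for the rest, bounding $|c\, e_R \cdot J e_\Omega| \leq c\lambda_M \|e_R\|\|e_\Omega\|$ and using $\Psi \geq b_1 \|e_R\|^2$ from \refprop{1}(iv) reduces positive definiteness to a $2\times 2$ quadratic-form inequality in $(\|e_R\|,\|e_\Omega\|)$, which is exactly the first clause of \refeqn{c0}.

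Differentiating $V$ and using $\dot\Psi = e_R\cdot e_\Omega$, \refeqn{eRdotE}, and the closed-loop equation above, the term $-k_R e_R\cdot e_\Omega$ cancels against $k_R \dot\Psi$, and the residual $\tilde J$-dependent terms regroup through $e_A$ as
\begin{align*}
-\tfrac{1}{2}\trs{\tilde J\,(\alpha_d e_A^T + e_A \alpha_d^T + \hat e_A \Omega\Omega^T - \Omega\Omega^T \hat e_A)},
\end{align*}
after symmetrizing the multipliers of $\tilde J$ (using symmetry of $\tilde J$) and rewriting $e_A\cdot(\Omega\times\tilde J \Omega) = \trs{\tilde J \hat e_A \Omega\Omega^T}$. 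The update law \refeqn{Jdot} is engineered precisely so that $\tfrac{1}{k_J}\trs{\tilde J\dot{\tilde J}}$ exactly annihilates this residue, leaving
\begin{align*}
\dot V = -k_\Omega \|e_\Omega\|^2 + c\, e_\Omega^T E^T J e_\Omega - c k_R \|e_R\|^2 - c k_\Omega\, e_R\cdot e_\Omega.
\end{align*}
Invoking $\|E\|\leq \tfrac{1}{\sqrt 2}\trs{G}$ from \refeqn{Eb} then majorizes $\dot V$ by a quadratic form in $(\|e_R\|,\|e_\Omega\|)$ whose negative definiteness, together with positivity of the $\|e_\Omega\|^2$ coefficient, is guaranteed by the second and third clauses of \refeqn{c0}.

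From $\dot V \leq -W(e_R,e_\Omega) \leq 0$ and positive definiteness of $V$, stability and uniform boundedness of $(e_R,e_\Omega,\tilde J)$ follow immediately. Because the closed-loop system is time-varying through $R_d(t)$, LaSalle does not apply; I would conclude asymptotic convergence of $(e_R,e_\Omega)$ via Barbalat's lemma: boundedness of $(e_R,e_\Omega,\tilde J)$ together with smoothness of the command gives boundedness of $\Omega$, $\bar J$, $\alpha_d$, hence of $\ddot V$, making $\dot V$ uniformly continuous and forcing $\dot V \to 0$; the quadratic negative-definite bound then yields $e_R, e_\Omega \to 0$, while $\tilde J$ only remains bounded (as expected without persistency of excitation). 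The main obstacle, I expect, is the algebraic bookkeeping matching \refeqn{Jdot} to the $\tilde J$-residue in $\dot V$: the symmetrization relies on $\tilde J$ being symmetric, which propagates because both $-\alpha_d e_A^T - e_A \alpha_d^T$ and $\Omega\Omega^T \hat e_A - \hat e_A \Omega\Omega^T$ are symmetric, and the conversion of $e_A\cdot(\Omega\times \tilde J \Omega)$ into a trace against $\Omega\Omega^T$ is where the precise skew/symmetric structure of \refeqn{Jdot} gets pinned down.
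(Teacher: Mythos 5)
Your proposal is correct and follows essentially the same route as the paper: the same Lyapunov function $\tfrac{1}{2}e_\Omega\cdot Je_\Omega + k_R\Psi + c\,Je_\Omega\cdot e_R + \tfrac{1}{2k_J}\|\tilde J\|_F^2$, the same exact cancellation of the $\tilde J$-dependent terms by the update law \refeqn{Jdot}, and the same quadratic-form bounds tied to the clauses of \refeqn{c0}. The only (immaterial) difference is the last step: the paper concludes via $e_R,e_\Omega\in\mathcal{L}_2\cap\mathcal{L}_\infty$ with bounded derivatives and Barbalat's lemma, whereas you apply the classical Barbalat argument to $\mathcal{V}$ through uniform continuity of $\dot{\mathcal{V}}$; both are standard and yield the same conclusion.
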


\begin{proof}
Consider the following Lyapunov function:
\begin{align}
\mathcal{V} = \frac{1}{2} e_\Omega \cdot J e_\Omega + k_R \Psi(R,R_d) + c Je_\Omega\cdot e_R + \frac{1}{2k_J}\|\tilde J\|_F^2.\label{eqn:V}
\end{align}
From \refeqn{eRPsi}, we obtain
\begin{align}
z^T W_{11} z \leq \mathcal{V} 
\end{align}
where $z = [\|e_R\|;\,\|e_\Omega\|;\|\tilde J\|_F]\in\Re^3$, and the matrix $W_{1}\in\Re^{3\times 3}$ are given by
\begin{align}
W_{11} = \begin{bmatrix}  b_1k_R & \frac{1}{2}c\lambda_{M}& 0\\
\frac{1}{2}c\lambda_{M} & \frac{1}{2}\lambda_{m}& 0\\
0&0&\frac{1}{2k_J}
\end{bmatrix}.\label{eqn:W11}
\end{align}

Substituting \refeqn{u} into \refeqn{eWdot} with $\Delta=0$, we obtain
\begin{align}
J\dot e_\Omega & = -\Omega\times J\Omega -J\alpha_d -k_R e_R - k_\Omega e_\Omega + \Omega\times \bar J\Omega +\bar J \alpha_d,\nonumber\\
& =-k_R e_R - k_\Omega e_\Omega -\tilde J \alpha_d -\Omega\times \tilde J \Omega.\label{eqn:JeWdot}
\end{align}
Using \refeqn{Psidot}, \refeqn{eRdotE}, \refeqn{JeWdot}, the time-derivative of $\mathcal{V}$ is given by
\begin{align*}
\dot{\mathcal{V}} 
& = e_\Omega\cdot(-k_R e_R - k_\Omega e_\Omega -\tilde J \alpha_d -\Omega\times \tilde J \Omega)\\
&\quad + k_R e_R\cdot e_\Omega + c(-k_R e_R - k_\Omega e_\Omega -\tilde J \alpha_d -\Omega\times \tilde J \Omega)\cdot e_R\\
&\quad + c Je_\Omega\cdot Ee_\Omega
+\frac{1}{k_J}\tr{\tilde J \dot{\tilde J}}\\
& = -k_\Omega \|e_\Omega\|^2 -ck_R\|e_R\|^2 + c Je_\Omega\cdot Ee_\Omega-ck_\Omega e_\Omega\cdot e_R  \\
&\quad -(e_\Omega+ce_R)\cdot( \tilde J \alpha_d +\Omega\times \tilde J \Omega)
+\frac{1}{k_J}\tr{\tilde J \dot{\tilde J}}.
\end{align*}
From \refeqn{eA}, and using the fact that $x\cdot y=\mathrm{tr}[xy^T]=\mathrm{tr}[yx^T]$ for any $x,y\in\Re^3$, and the scalar triple product identity, this can be written as
\begin{align*}
\dot{\mathcal{V}}
& = -k_\Omega \|e_\Omega\|^2 -ck_R\|e_R\|^2
+ c Je_\Omega\cdot Ee_\Omega-ck_\Omega e_\Omega\cdot e_R  \\
&\quad 
+\tr{\tilde J \braces{-\alpha_de_A^T-\Omega(e_A\times\Omega)^T +\frac{1}{k_J} \dot{\tilde J}}}.
\end{align*}
Since $\dot{\tilde J}=-\dot{\bar J}$, we can substitute \refeqn{Jdot} into this. Using the facts that $\mathrm{tr}[\tilde J A]=\mathrm{tr}[\tilde J A^T]$ for any $A\in\Re^{3\times 3}$, and $(e_A\times \Omega)^T = (\hat e_A \Omega)^T=-\Omega^T\hat e_A$, it reduces to
\begin{align}
\dot{\mathcal{V}}
& = -k_\Omega \|e_\Omega\|^2 -ck_R\|e_R\|^2
+ c Je_\Omega\cdot Ee_\Omega-ck_\Omega e_\Omega\cdot e_R.\label{eqn:Vdot00}
\end{align}
From \refeqn{Eb}, it is bounded by
\begin{align}
\dot{\mathcal{V}} & \leq -(k_\Omega-\frac{c}{\sqrt{2}}\lambda_M\trs{G}) \|e_\Omega\|^2
-ck_R\|e_R\|^2\nonumber\\&\quad 
+ck_\Omega \|e_\Omega\|\|e_R\| = -\zeta^T W_2 \zeta,\label{eqn:Vdot}
\end{align}
where $\zeta=[\|e_R\|;\,\|e_\Omega\|]\in\Re^2$, and the matrix $W_2\in\Re^{2\times 2}$ is given by
\begin{align}
W_2 = \begin{bmatrix}  c k_R & -\frac{c k_\Omega}{2}\\
- \frac{c k_\Omega}{2} & k_\Omega - \frac{c}{\sqrt{2}}\lambda_M\trs{G}\end{bmatrix}.\label{eqn:W2}
\end{align}
The inequality \refeqn{c0} for the constant $c$ guarantees that the matrices $W_{11},W_{2}$ are positive definite. 

This implies that the Lyapunov function $\mathcal{V}(t)$ is bounded from below and it is nonincreasing. Therefore, it has a limit, $\lim_{t\rightarrow\infty} \mathcal{V}(t)=\mathcal{V}_\infty$, and $e_R,e_\Omega,\bar J\in\mathcal{L}_\infty$.\footnote{A function $f:\Re\rightarrow\Re$ belongs to the $\mathcal{L}_p$ space for $p\in[1,\infty)$, if the following $p$-norm of the function exits, $\|f\|_p=\braces{\int_{0}^\infty |f(\tau)|^p \,d\tau}^{1/p}$. } From \refeqn{eRdotE}, \refeqn{JeWdot}, we have $\dot e_R,\dot e_\Omega\in\mathcal{L}_\infty$. Furthermore $e_R,e_\Omega\in\mathcal{L}_2$ since $\int_{0}^\infty \zeta(\tau)^T W_2 \zeta(\tau)d\tau \leq \mathcal{V}(0)-\mathcal{V}_\infty<\infty$. According to Barbalat's lemma (or Lemma 3.2.5 in \cite{IoaSun96}), we have $e_R,e_\Omega\rightarrow 0$ as $t\rightarrow\infty$.
\end{proof}

\begin{remark}
This proposition guarantees that the attitude error vector $e_R$ asymptotically converges to zero. But, this does not necessarily imply that $R\rightarrow R_d$ as $t\rightarrow\infty$. According to Proposition \ref{prop:1}, there exist three additional critical points of $\Psi$, namely $\{R_d\exp(\pi\hat s)\}$ for $s\in\{e_1,e_2,e_3\}$, where $e_R=0$. This is due to the nonlinear structures of $\SO$, and these cannot be avoided for any continuous control systems~\cite{KodPICDC88,BhaBerSCL00}.

But, we can show that those three additional equilibrium points are unstable, by using linearization or by showing that the hessian of $\Psi$ is indefinite at those points. It turned out that these points are saddle equilibria, which have both of stable manifolds and unstable manifolds~\cite{LeeLeoPICDC11}. The union of the stable manifolds to these undesirable equilibria has a lower dimension than the tangent bundle of the configuration space, and we say that it has an \textit{almost}-global stabilization property.
\end{remark}

\begin{remark}
At Assumption \ref{assump:J}, the minimum eigenvalue $\lambda_m$ and the maximum eigenvalue $\lambda_M$ of the inertia matrix $J$ are required. But, in Proposition \ref{prop:AT}, they are only used to find the coefficient $c$ at \refeqn{c0}. So, Assumption \ref{assump:J} can be relaxed as requiring an upper bound of $\lambda_m$ and a lower bound of $\lambda_M$, which are relatively simpler to estimate.
\end{remark}

\subsection{Robust Adaptive Attitude Tracking}

The adaptive tracking control system developed in the previous section is based on the assumption that there is no disturbance in the attitude dynamics. But, it has been discovered that adaptive control schemes may become unstable in the presence of small disturbances~\cite{IoaSun96}. Robust adaptive control deals with redesigning or modifying adaptive control schemes to make them robust with respect to unmodeled dynamics or bounded disturbances. In this section, we develop a robust adaptive attitude tracking control system assuming that the bound of disturbances are given.

\begin{assump}\label{assump:Delta}
The disturbance term in the attitude dynamics at \refeqn{Wdot} is bounded by a known constant, i.e. $\|\Delta\|\leq \delta $ for a given positive constant $\delta$.
\end{assump}

\begin{prop}\label{prop:RAC}
Suppose that Assumptions \ref{assump:J} and \ref{assump:Delta} hold. For a given attitude command $R_d(t)$, and positive constants $k_R,k_\Omega,k_J,\sigma,\epsilon\in\Re$, we define a control input $u\in\Re^3$, and an update law for $\bar J$ as follows:
\begin{align}
u & = -k_R e_R - k_\Omega e_\Omega + \Omega\times \bar{J}\Omega +\bar{J} \alpha_d+v,\label{eqn:u2}\\
v & = -\frac{\delta^2 e_A}{\delta\|e_A\|+\epsilon},\label{eqn:v}\\
\dot{\bar J} & = \frac{k_J}{2} (-\alpha_d e_A^T - e_A\alpha_d^T +\Omega\Omega^T \hat e_A -\hat e_A \Omega\Omega^T-2\sigma \bar J),\label{eqn:Jdot2}
\end{align}
where $e_A\in\Re^3$ is an augmented error vector given at \refeqn{eA} for a positive constant $c$ satisfying \refeqn{c0}. Then, if $\sigma$ and $\epsilon$ are sufficiently small, the zero equilibrium of the tracking errors $(e_R,e_\Omega)$ and the estimation error $\tilde J$ are uniformly bounded. 
\end{prop}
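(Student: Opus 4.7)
The plan is to reuse the Lyapunov function $\mathcal{V}$ of \refeqn{V} from \refprop{AT} and to isolate only the new contributions produced by the three modifications in \refprop{RAC}. Substituting the robust control \refeqn{u2} into \refeqn{eWdot} gives $J\dot e_\Omega = -k_R e_R - k_\Omega e_\Omega - \tilde J\alpha_d - \Omega\times\tilde J\Omega + v + \Delta$, which differs from \refeqn{JeWdot} only by the extra $v+\Delta$. Repeating the manipulation that produced \refeqn{Vdot00}, the adaptation-law cancellation of the $\tilde J$-bearing cross terms identified in the proof of \refprop{AT} still goes through, and what remain as residuals are $e_A\cdot(v+\Delta)$ (from $v+\Delta$ acting through the $e_\Omega$ and $ce_R$ factors that combine into $e_A=e_\Omega+ce_R$) and $\sigma\tr{\tilde J\bar J}$ (produced when $\tfrac{1}{k_J}\tr{\tilde J\dot{\tilde J}}=-\tfrac{1}{k_J}\tr{\tilde J\dot{\bar J}}$ is expanded using the new $-2\sigma\bar J$ term in \refeqn{Jdot2}).

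I would next bound these two residuals by standard estimates. For the first, combining $\|\Delta\|\leq\delta$ with the explicit form of $v$ in \refeqn{v} gives the smooth-saturation inequality $e_A\cdot(v+\Delta)\leq \delta\|e_A\|-\delta^2\|e_A\|^2/(\delta\|e_A\|+\epsilon)=\delta\epsilon\|e_A\|/(\delta\|e_A\|+\epsilon)<\epsilon$. For the second, writing $\bar J=J-\tilde J$ and applying Young's inequality yields $\sigma\tr{\tilde J\bar J}\leq -\tfrac{\sigma}{2}\|\tilde J\|_F^2+\tfrac{\sigma}{2}\|J\|_F^2$, supplying the dissipation on $\|\tilde J\|_F^2$ that was absent in \refprop{AT}.

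Assembling these bounds upgrades \refeqn{Vdot} to $\dot{\mathcal{V}}\leq -Y^T W_3\,Y + \tfrac{\sigma}{2}\|J\|_F^2+\epsilon$, where $Y=[\|e_R\|,\|e_\Omega\|,\|\tilde J\|_F]^T$ and $W_3$ is $W_2$ of \refeqn{W2} augmented by a diagonal $(3,3)$ entry equal to $\sigma/2$. The hypothesis \refeqn{c0} on $c$ makes the upper $2\times 2$ block positive definite, hence $W_3\succ 0$. Pairing this against a quadratic upper bound $\mathcal{V}\leq Y^T W_{12}\,Y$ obtained from $\tfrac{1}{2}\lambda_M\|e_\Omega\|^2$, Cauchy--Schwarz on the $cJe_\Omega\cdot e_R$ term, the local estimate $\Psi\leq b_2\|e_R\|^2$ of \refeqn{eRPsi2}, and $\tfrac{1}{2k_J}\|\tilde J\|_F^2$, yields the comparison inequality $\dot{\mathcal{V}}\leq -\alpha\mathcal{V}+C$ with $\alpha=\lambda_{\min}(W_3)/\lambda_{\max}(W_{12})$ and $C=\tfrac{\sigma}{2}\|J\|_F^2+\epsilon$. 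The comparison lemma then delivers $\mathcal{V}(t)\leq \mathcal{V}(0)e^{-\alpha t}+C/\alpha$, which is the uniform ultimate boundedness claimed.

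The main obstacle I foresee is that \refeqn{eRPsi2} is only valid on the sublevel set $\{\Psi<\psi<h_1\}$, so $Y^T W_{12}\,Y$ is only a local majorant of $\mathcal{V}$. Making the comparison argument rigorous therefore requires choosing an $\mathcal{V}$-sublevel set contained in that region and showing that both $\mathcal{V}(0)$ and the ultimate bound $C/\alpha$ lie strictly below its threshold, so that trajectories never leave the region where $W_{12}$ is valid. This is exactly where the qualifier ``$\sigma$ and $\epsilon$ sufficiently small'' enters, since $C/\alpha$ scales linearly with $\sigma\|J\|_F^2+\epsilon$ and must be driven below that threshold, together with an implicit restriction on the initial attitude error.
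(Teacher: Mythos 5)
Your proposal is correct and follows essentially the same route as the paper's proof: the same Lyapunov function, the same isolation of the residual terms $e_A\cdot(\Delta+v)\leq\epsilon$ and $\sigma\tr{\tilde J\bar J}\leq-\tfrac{\sigma}{2}\|\tilde J\|_F^2+\tfrac{\sigma}{2}\|J\|_F^2$, the same augmented matrix $W_3$, and the same comparison inequality $\dot{\mathcal{V}}\leq-\lambda_{\min}(W_3)/\lambda_{\max}(W_{12})\,\mathcal{V}+C$. The obstacle you flag at the end is exactly what the paper resolves by requiring the sublevel-set condition $d_1<d_2$ in \refeqn{d1d2} (achievable for sufficiently small $\sigma,\epsilon$), so that trajectories starting with $\mathcal{V}(0)<d_2$ remain in the region where \refeqn{eRPsi2} and hence $W_{12}$ are valid.
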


\begin{proof}
Consider the Lyapunov function $\mathcal{V}$ at \refeqn{V}. For a positive constant $\psi<h_1$, define $D\subset\SO$ as
\begin{align*}
D=\{R\in\SO\,|\,\Psi < \psi < h_1\}
\end{align*}
From Proposition \ref{prop:1}, the Lyapunov function is bounded in $D$ by
\begin{align}
z^T W_{11} z \leq \mathcal{V} \leq z^T W_{12} z,\label{eqn:Vb}
\end{align}
where $z = [\|e_R\|;\,\|e_\Omega\|;\,\|\tilde J\|_F]\in\Re^2$, the matrix $W_{11}\in\Re^{2\times 2}$ is given by \refeqn{W11}, and the matrix $W_{12}$ is given by
\begin{align*}
W_{12} = \begin{bmatrix}  b_2k_R & \frac{1}{2}c_2\lambda_{M}& 0\\
\frac{1}{2}c_2\lambda_{M} & \frac{1}{2}\lambda_{M}&0\\
0 & 0& \frac{1}{2k_J}
\end{bmatrix}.
\end{align*}

The time-derivative of $\mathcal{V}$ along the presented control inputs is written as
\begin{align}
\dot{\mathcal{V}}
& = -k_\Omega \|e_\Omega\|^2 -ck_R\|e_R\|^2
+ c Je_\Omega\cdot Ee_\Omega -ck_\Omega e_\Omega\cdot e_R \nonumber\\
&\quad+e_A\cdot(\Delta+v) +\sigma\tr{\tilde J\bar J}.\label{eqn:Vdot200}
\end{align}
Compared with \refeqn{Vdot00}, this has three additional terms caused by $\Delta, v$ and $\sigma$. From Assumption \ref{assump:Delta} and \refeqn{v}, the second last term of \refeqn{Vdot200} is bounded by
\begin{align}
e_A\cdot(\Delta+v) & \leq \delta\|e_A\| - \frac{\delta^2\|e_A\|^2}{\delta\|e_A\|+\epsilon} =  \frac{\delta\|e_A\|}{\delta\|e_A\|+\epsilon}\epsilon \leq \epsilon.\label{eqn:B00}
\end{align}
The last term of \refeqn{Vdot200} is bounded by
\begin{align*}
\tr{\tilde J\bar J} & = \tr{\tilde J (J-\tilde J)} =\sum_{1\leq i,j\leq 3} (-\tilde J_{ij}^2 +J_{ij}\tilde J_{ij})\\
&\leq \sum_{1\leq i,j\leq 3} (-\frac{1}{2} \tilde J_{ij}^2 +\frac{1}{2} J_{ij}^2)
=-\frac{1}{2}\tr{\tilde J^2} + \frac{1}{2}\tr{J^2}\\
& = -\frac{1}{2}\|\tilde J\|_F^2 + \frac{1}{2}\|J\|_F^2.
\end{align*}
Using the relation between a Frobenius norm and a matrix 2-norm, we have $\|J\|_F\leq \sqrt{3}\|J\| =\sqrt{3}\lambda_M$. Therefore,
\begin{align}
\tr{\tilde J\bar J} \leq -\frac{1}{2}\|\tilde J\|_F^2 +\frac{3}{2}\lambda_M^2.\label{eqn:B01}
\end{align}
Substituting \refeqn{B00}, \refeqn{B01} into \refeqn{Vdot200}, we obtain
\begin{align}
\dot{\mathcal{V}} & \leq -z^T W_3 z +\frac{3}{2}\sigma\lambda_M^2 + \epsilon \label{eqn:Vdot20}
\end{align}
where the matrix $W_3\in\Re^{3\times 3}$ is given by
\begin{align}
W_3 = \begin{bmatrix}  c k_R & -\frac{c k_\Omega}{2}&0\\
- \frac{c k_\Omega}{2} & k_\Omega - \frac{c}{\sqrt{2}}\lambda_M\trs{G}&0\\
0&0&\frac{1}{2}\sigma\end{bmatrix}.\label{eqn:W3}
\end{align}
The inequality \refeqn{c0} for the constant $c$ guarantees that the matrices $W_{11},W_{12},W_{3}$ become positive definite. Then, we have
\begin{align}
\dot{\mathcal{V}}\leq -\frac{\lambda_{\min}(W_2)}{\lambda_{\max}(W_{12})} \mathcal{V}+\frac{3}{2}\sigma\lambda_M^2 + \epsilon,\label{eqn:Vdot2}
\end{align}
where $\lambda_{\min}{(\cdot)}$ and $\lambda_{\max}(\cdot)$ represent the minimum eigenvalue and the maximum eigenvalue of a matrix, respectively. This implies that $\dot{\mathcal{V}}<0$ when $\mathcal{V} > \frac{\lambda_{\max}(W_{12})}{\lambda_{\min}(W_3)}(\frac{3}{2}\sigma\lambda_M^2 + \epsilon)\triangleq d_1$.

\setlength{\unitlength}{0.1\columnwidth}
\begin{figure}
\scriptsize\selectfont
\centerline{
\begin{picture}(4.5,4.9)(0,0)
\put(0,0){\includegraphics[width=0.45\columnwidth]{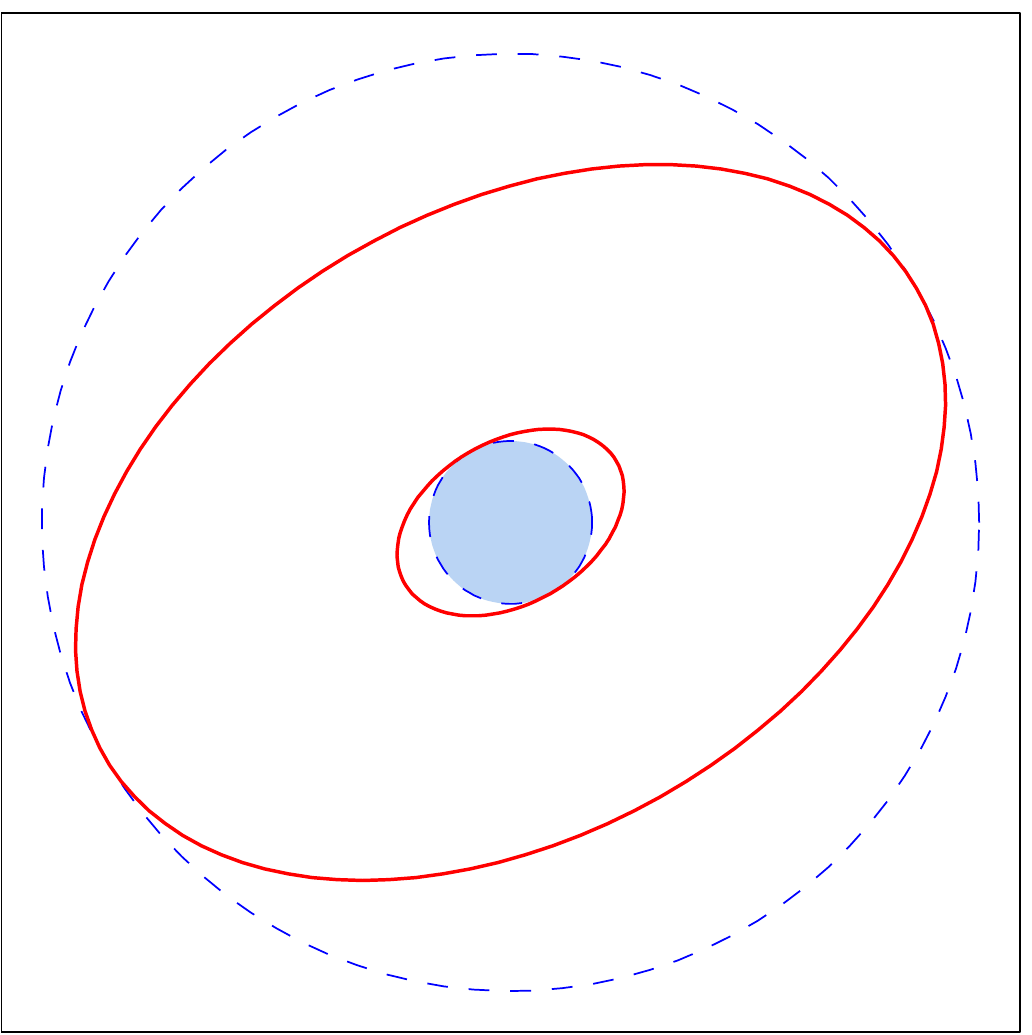}}
\put(0.1,4.0){$B_{\psi/b_2}$}
\put(0,4.6){$D\times\Re^3\times\Re^{3\times 3}$}
\put(3.1,3.45){$L_{d_2}$}
\put(2.64,2.6){$L_{d_1}$}
\end{picture}}
\caption{Boundedness of the error: Outside of the shaded region, represented by $\{\lambda_{\min}(W_3)\|z\|^2\geq (\frac{3}{2}\sigma\lambda_M^2 + \epsilon)\}$, we have $\dot{\mathcal{V}}\leq 0$ from \refeqn{Vdot20}. Inside of the larger ball, $B_{\psi/b_2}=\{ \|z\|^2 \leq \psi/b_2 \}\subset D\times \Re^3\times\Re^{3\times 3}$, equations \refeqn{Vb} and \refeqn{Vdot2} hold. The inequality \refeqn{d1d2} guarantees that the smallest sublevel set $L_{d_1}$ of $\mathcal{V}$, covering the shaded area, lies inside of the largest sublevel set $L_{d_2}$ of $\mathcal{V}$ in $B_{\psi/b_2}$, i.e. $L_{d_1}\subset L_{d_2}$. Therefore, along any solution starting in $L_{d_2}$, $\mathcal{V}$ decreases until the solution enters $L_{d_1}$, thereby yielding uniform boundedness.}
\end{figure}

Let a sublevel set of $\mathcal{V}$ be $L_\gamma=\{(R,\Omega,\bar J)\in \SO\times\Re^3\times\Re^{3\times 3}\}\,|\, \mathcal{V}\leq \gamma\}$ for a constant $\gamma>0$. If the following inequality for $\gamma$ is satisfied
\begin{align*}
\gamma < \frac{\psi}{b_2}\lambda_{\min}(W_{11})\triangleq d_2,
\end{align*}
we can guarantee that $L_\gamma\subset D\times\Re^3\times\Re^{3\times 3}$, since it implies that $\|z\|^2 < \frac{\psi}{b_2}$, which leads $\Psi\leq b_2\|e_R\|^2\leq b_2\|z\|^2<\psi$. 

Then, from \refeqn{Vdot2}, a sublevel set $L_\gamma$ is a positively invariant set, when $d_1<\gamma<d_2$, and it becomes smaller until $\gamma=d_1$. In order to guarantee the existence of such $L_{\gamma}$, the following inequality should be satisfied
\begin{align}
d_1=\frac{\lambda_{\max}(W_{12})}{\lambda_{\min}(W_3)}(\frac{3}{2}\sigma\lambda_M^2 + \epsilon) 
< \frac{\psi}{b_2}\lambda_{\min}(W_{11})=d_2,\label{eqn:d1d2}
\end{align}
which can be achieved by choosing sufficiently small $\sigma$ and $\epsilon$. Then, according to Theorem 5.1 in~\cite{Kha96}, for any initial condition satisfying $\mathcal{V}(0)< d_2$, its solution exponentially converges to the following set:
\begin{align*}
L_{d_1}\subset \braces{\|z\|^2 \leq \frac{\lambda_{\max}(W_{12})}{\lambda_{\min}(W_{11})\lambda_{\min}(W_2)}\parenth{\frac{3}{2}\sigma\lambda_M^2 + \epsilon}}.
\end{align*}
\end{proof}

\begin{remark}
The robust adaptive control system in Proposition \ref{prop:RAC} is referred to as fixed $\sigma$-modification \cite{IoaSun96}, where robustness is achieved at the expense of replacing the asymptotic tracking property of Proposition \ref{prop:AT} by boundedness. This property can be improved by the following approaches: (i) the leakage term $-2\sigma \bar J$ at \refeqn{Jdot2} can be replaced by $-2\sigma (\bar J-J^\star)$, where $J^*$ denotes the best possible prior estimate of the inertia matrix. This shifts the tendency of $\bar J$ from zero to $J^\star$, thereby reducing the ultimate bound, (ii) a switching $\sigma$-modification or $\epsilon_1$-modification can be used to improve the convergence properties in the expense of discontinuities, (iii) the constant $\epsilon$ at \refeqn{v} can be replaced by $\epsilon\exp(-\beta t)$ for any $\beta >0$ to reduce the ultimate bound. The corresponding stability analyses are similar to the presented case, and they are deferred to a future study.
\end{remark}

%

\section{Numerical Examples}

Parameters of a rigid body model and control systems are chosen as follows\footnote{All of variables are defined in kilograms, meters, seconds, and radians}:
\begin{gather*}
J=\begin{bmatrix}
   1.059\times 10^{-2}&  -5.156\times 10^{-6}&   2.361\times 10^{-5}\\
  -5.156\times 10^{-6}&   1.059\times 10^{-2}&  -1.026\times 10^{-5}\\
   2.361\times 10^{-5}&  -1.026\times 10^{-5}&   1.005\times 10^{-2}\\
\end{bmatrix},\\
k_R=0.0424,\quad k_\Omega=0.0296,\quad k_J=0.1,\\
c=1.0,\quad\sigma=0.01,\quad\epsilon=0.002,\quad\delta=0.2.
\end{gather*}
Initial conditions are given by
\begin{gather*}
\bar J(0)=0.001I, \quad R(0)=I,\quad \Omega(0)=0.
\end{gather*}
The desired attitude command is described by using 3-2-1 Euler angles~\cite{ShuJAS93}, i.e. $R_d(t)=R_d(\phi(t),\theta(t),\psi(t))$, and these angles are chosen as
\begin{align*}
\phi(t) =  \frac{\pi}{9}\sin(\pi t), \quad \theta(t)=\frac{\pi}{9}\cos(\pi t),\quad \psi(t)=0.
\end{align*}

We consider three cases:
\begin{itemize}
\item[(i)] Adaptive attitude tracking control system presented at Proposition \ref{prop:AT} without disturbances.
\item[(ii)] Adaptive attitude tracking control system presented at Proposition \ref{prop:AT} with the following disturbances:
\begin{align*}
\Delta = 0.1\begin{bmatrix}\sin(2\pi t)& \cos(5\pi t) & R_{11}(t)\end{bmatrix}.
\end{align*}
\item[(iii)] Robust adaptive attitude tracking control system presented at Proposition \ref{prop:RAC} with the above disturbance model.
\end{itemize}

It has been shown that general-purpose numerical integrators fail to preserve the structure of the special orthogonal group $\SO$, and they may yields unreliable computational results for complex maneuvers of rigid bodies~\cite{IseMunAN00,HaiLub00}. In this paper, we use a geometric numerical integrators, referred to as a Lie group variational integrator, to preserve the underlying geometric structures of the attitude dynamics accurately~\cite{LeeLeoCMDA07}.

\begin{figure}
\centerline{
	\subfigure[Attitude error vector $e_R$]{
		\includegraphics[width=0.505\columnwidth]{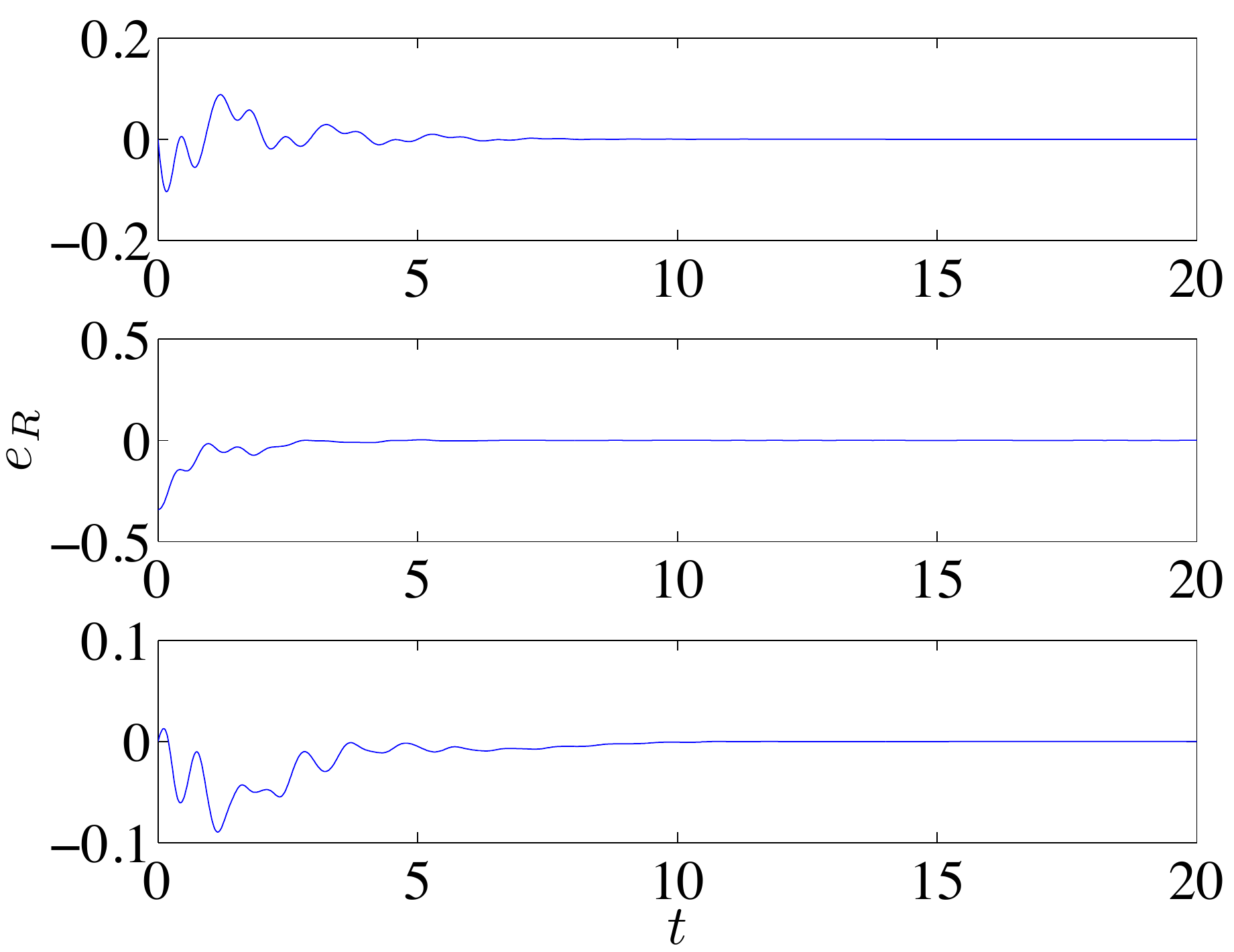}}
	\subfigure[Angular velocity ($\Omega$:blue, $\Omega_d$:red)]{
		\includegraphics[width=0.495\columnwidth]{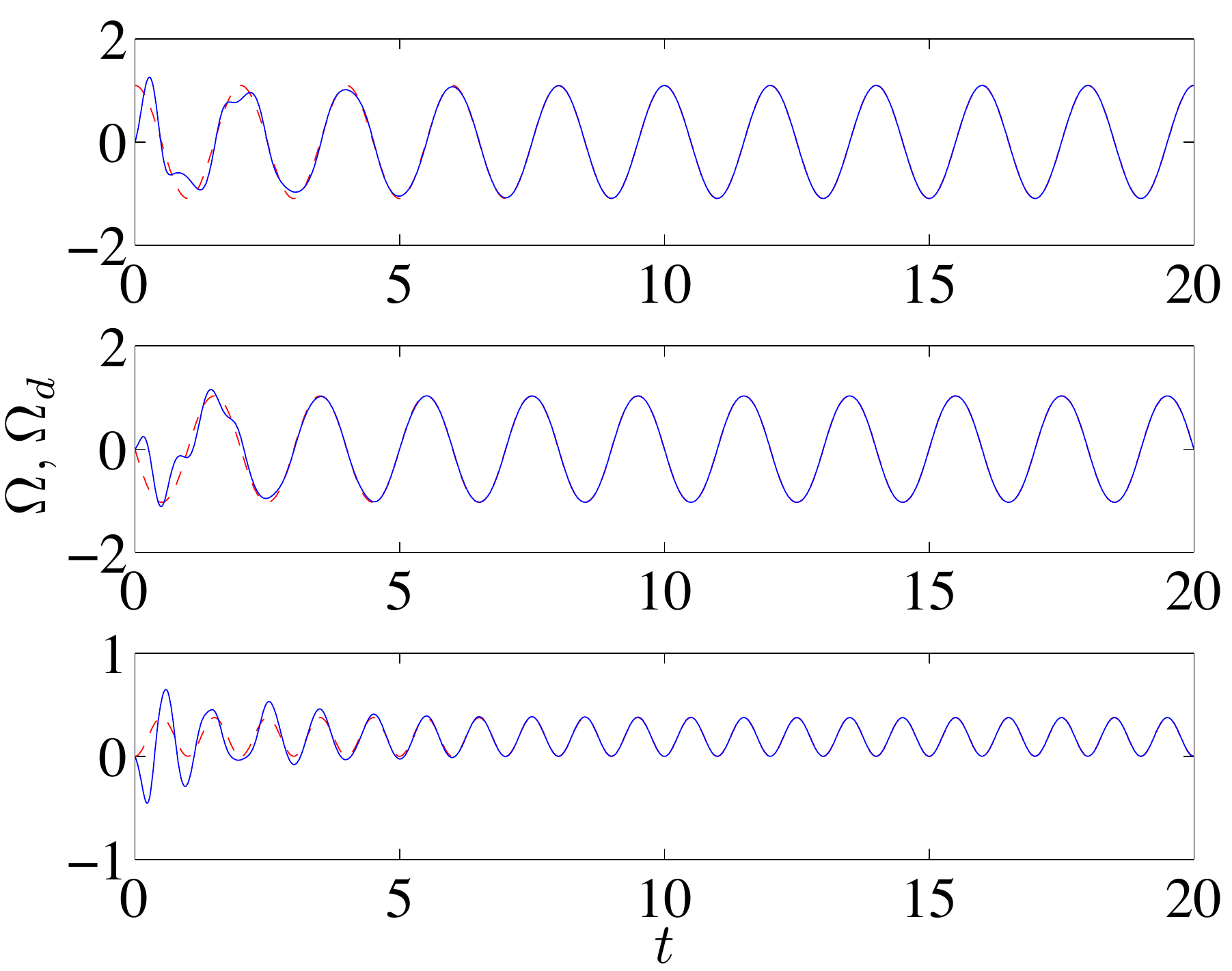}}
}
\centerline{
	\subfigure[Inertia estimate $\bar J$ ($\bar J_{11},\bar J_{12}$:solid, $\bar J_{22},\bar J_{13}$:dashed, $\bar J_{33},\bar J_{23}$:dotted)]{
		\includegraphics[width=0.5\columnwidth]{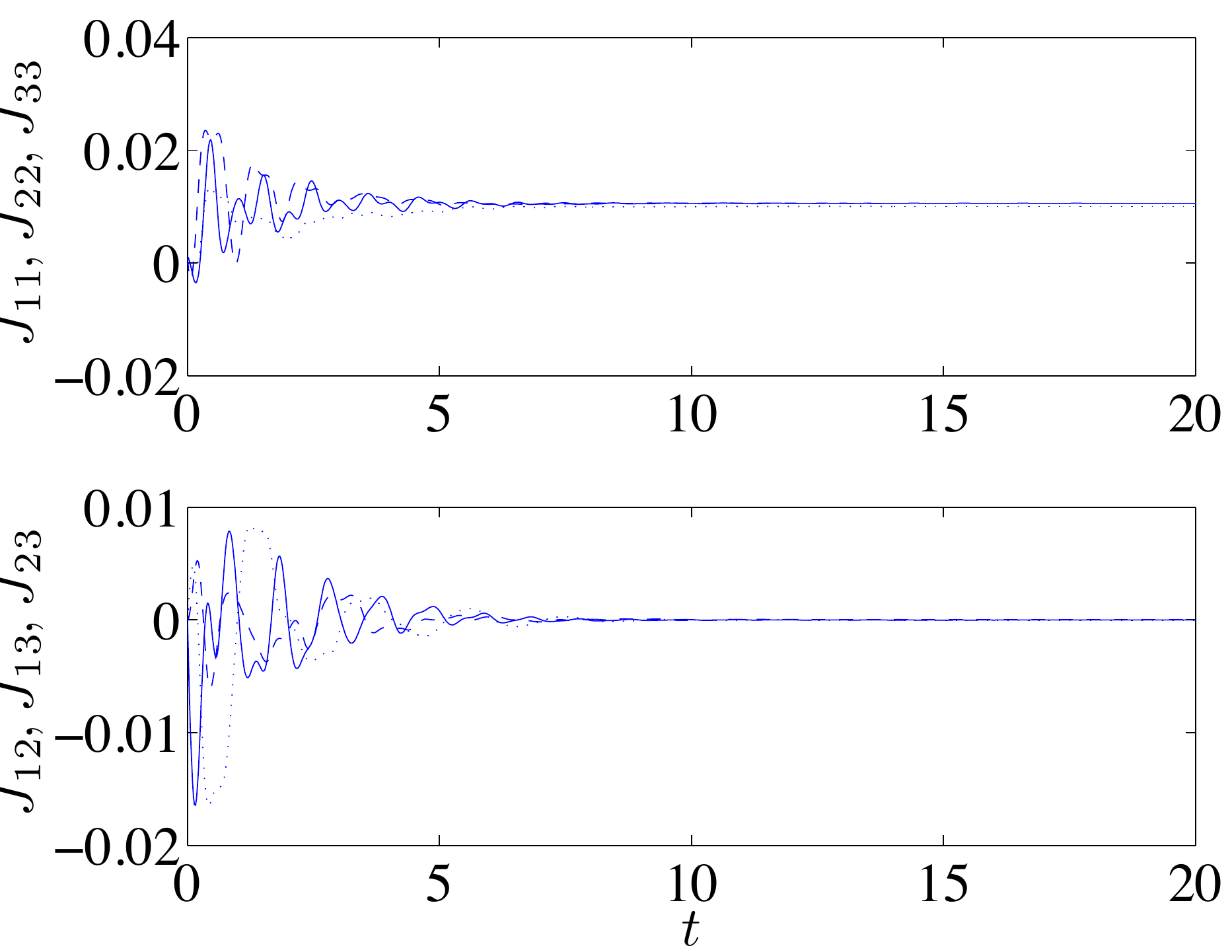}}
	\subfigure[Control input $u$]{
		\includegraphics[width=0.5\columnwidth]{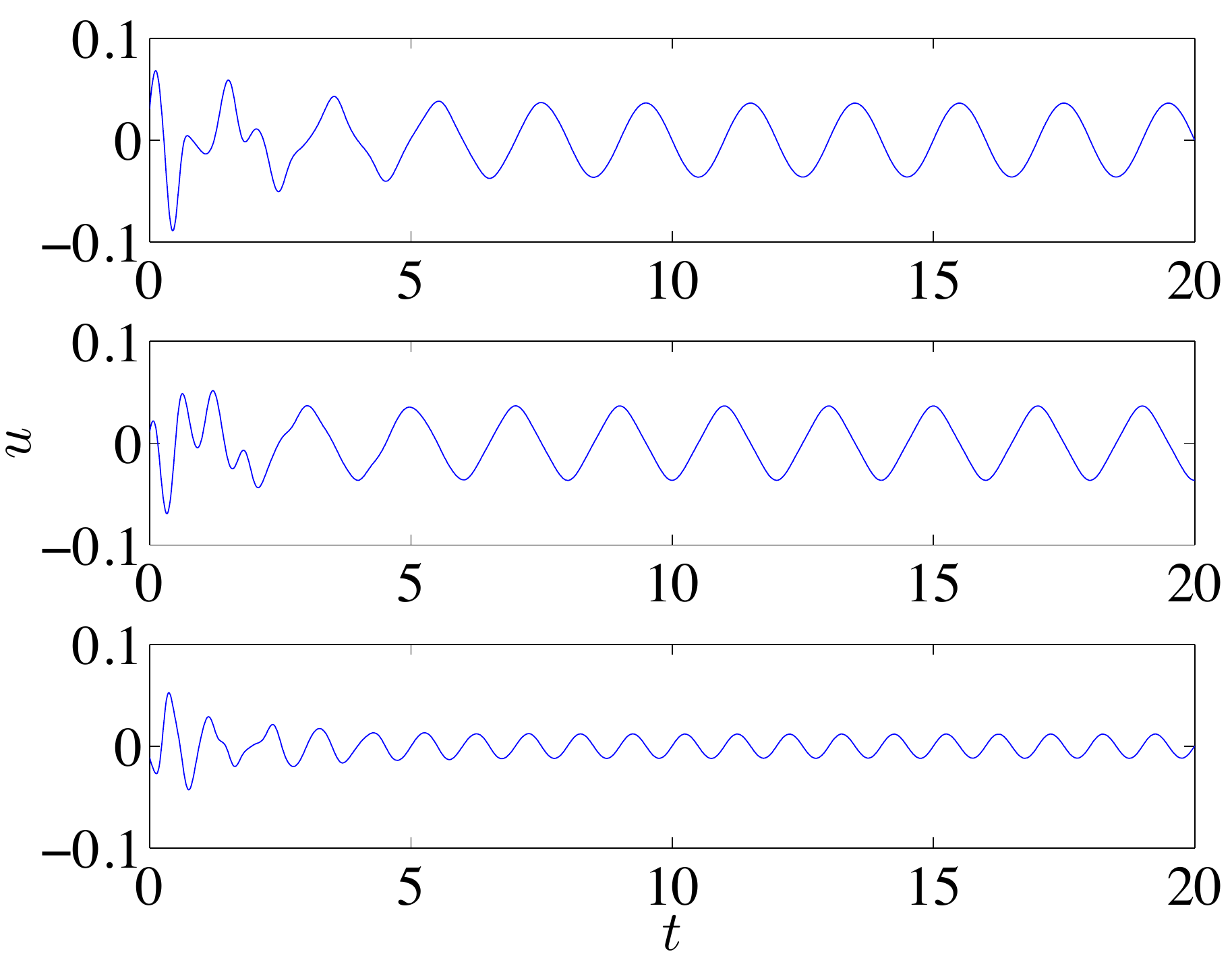}}
}
\caption{Adaptive attitude tracking without disturbances}\label{fig:1}
\end{figure}
\begin{figure}
\centerline{
	\subfigure[Attitude error vector $e_R$]{
		\includegraphics[width=0.50\columnwidth]{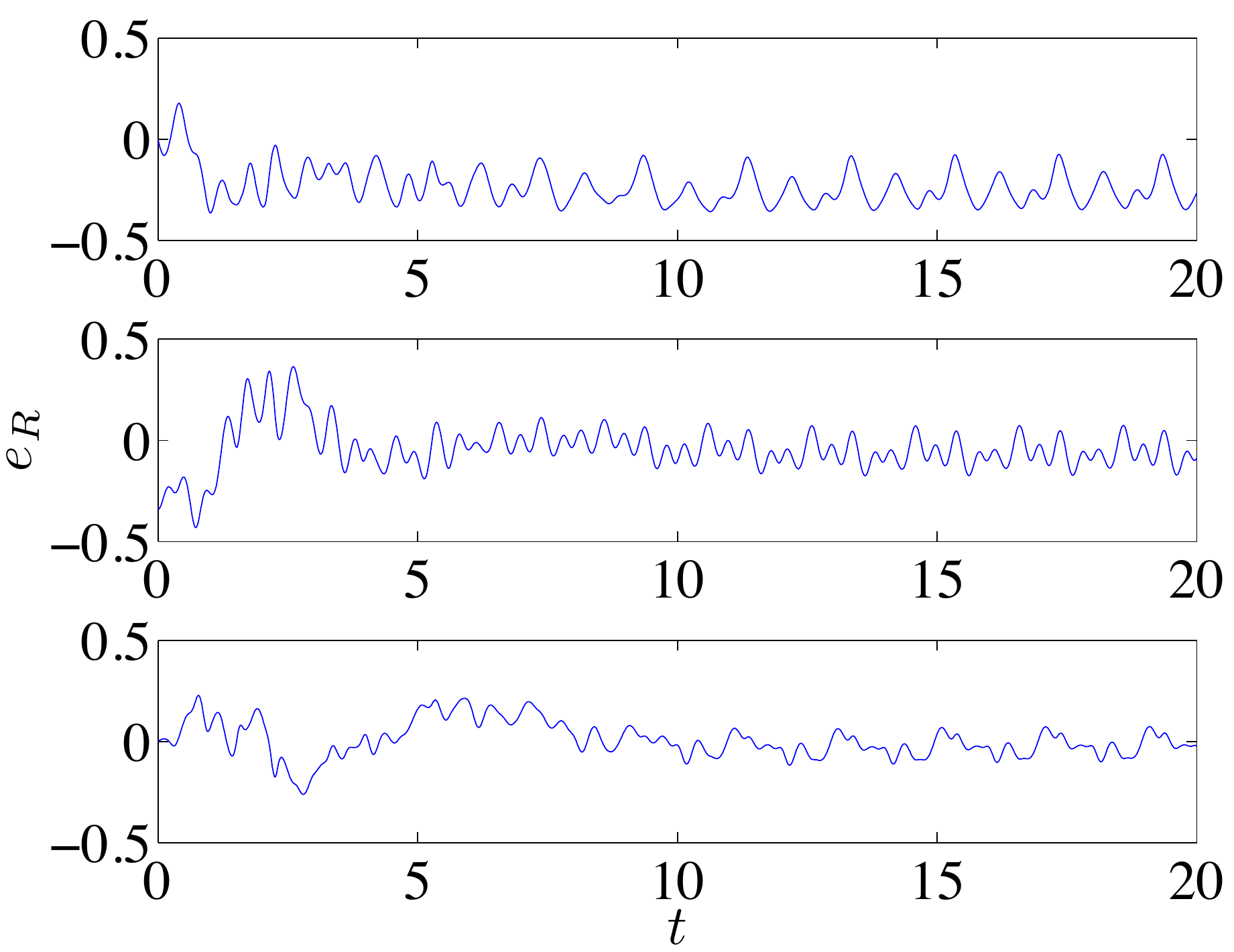}}
	\subfigure[Angular velocity ($\Omega$:blue, $\Omega_d$:red)]{
		\includegraphics[width=0.495\columnwidth]{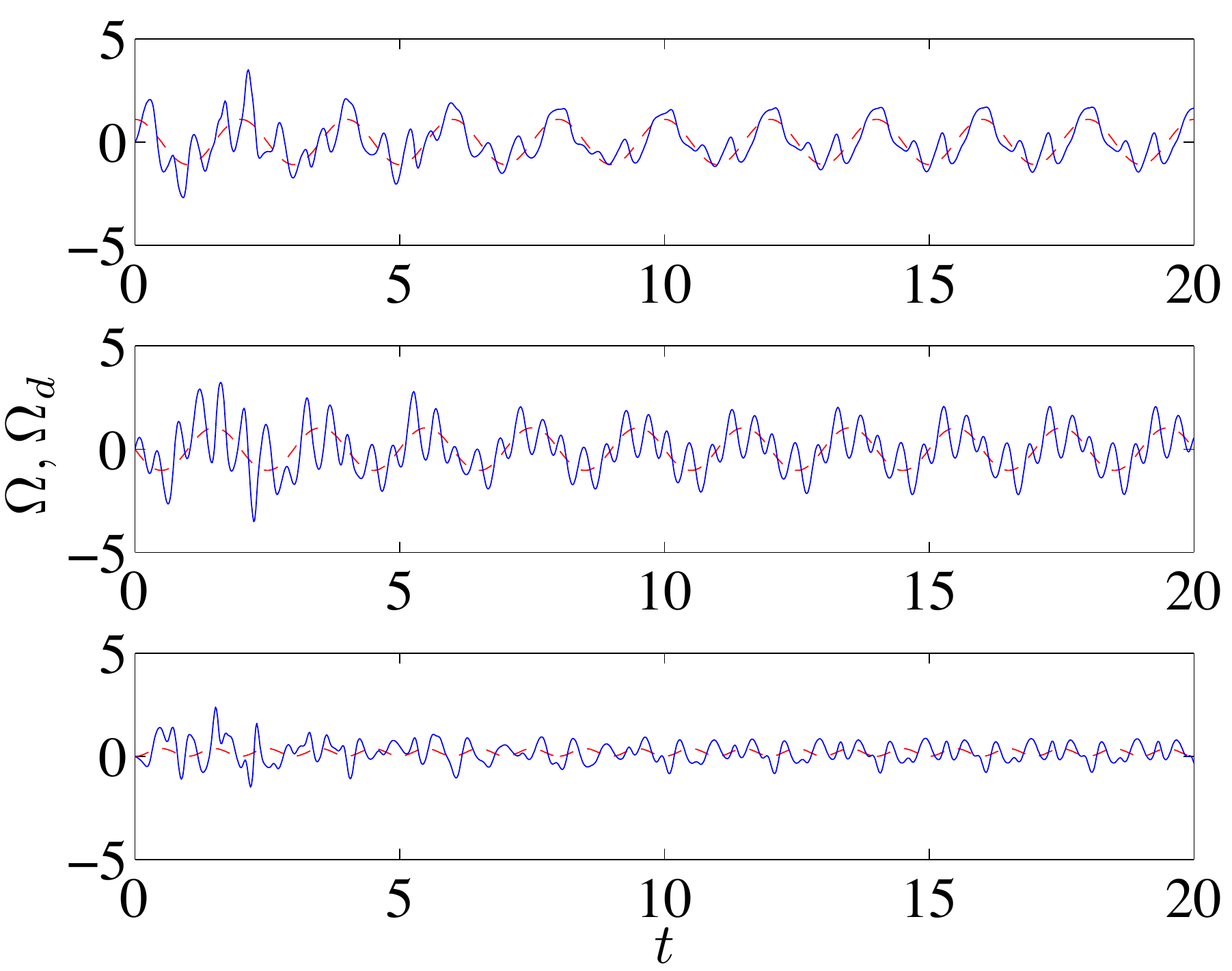}}
}
\centerline{
	\subfigure[Inertia estimate $\bar J$ ($\bar J_{11},\bar J_{12}$:solid, $\bar J_{22},\bar J_{13}$:dashed, $\bar J_{33},\bar J_{23}$:dotted)]{
		\includegraphics[width=0.5\columnwidth]{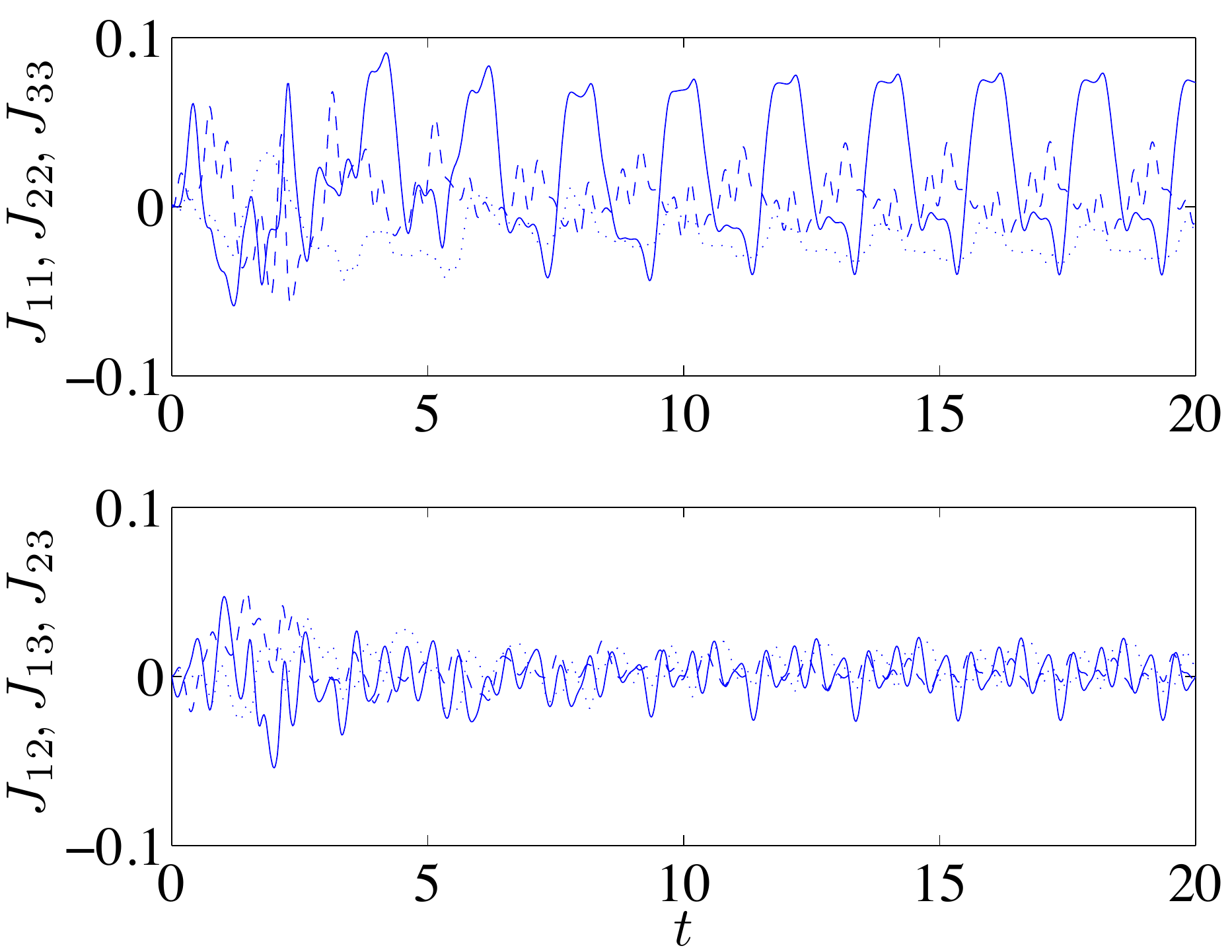}}
	\subfigure[Control input $u$]{
		\includegraphics[width=0.5\columnwidth]{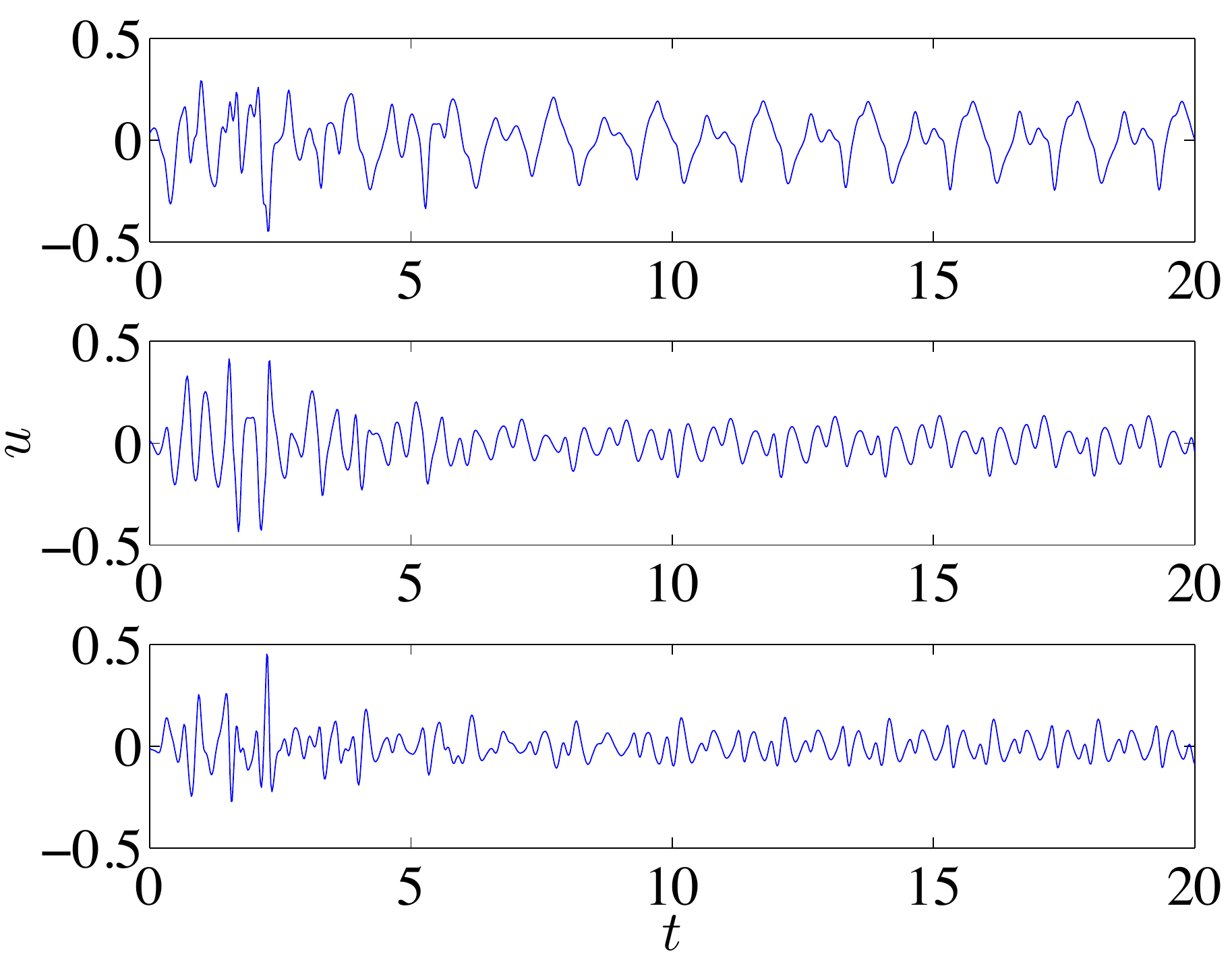}}
}
\caption{Adaptive attitude tracking with disturbances}\label{fig:2}
\end{figure}
\begin{figure}
\centerline{
	\subfigure[Attitude error vector $e_R$]{
		\includegraphics[width=0.52\columnwidth]{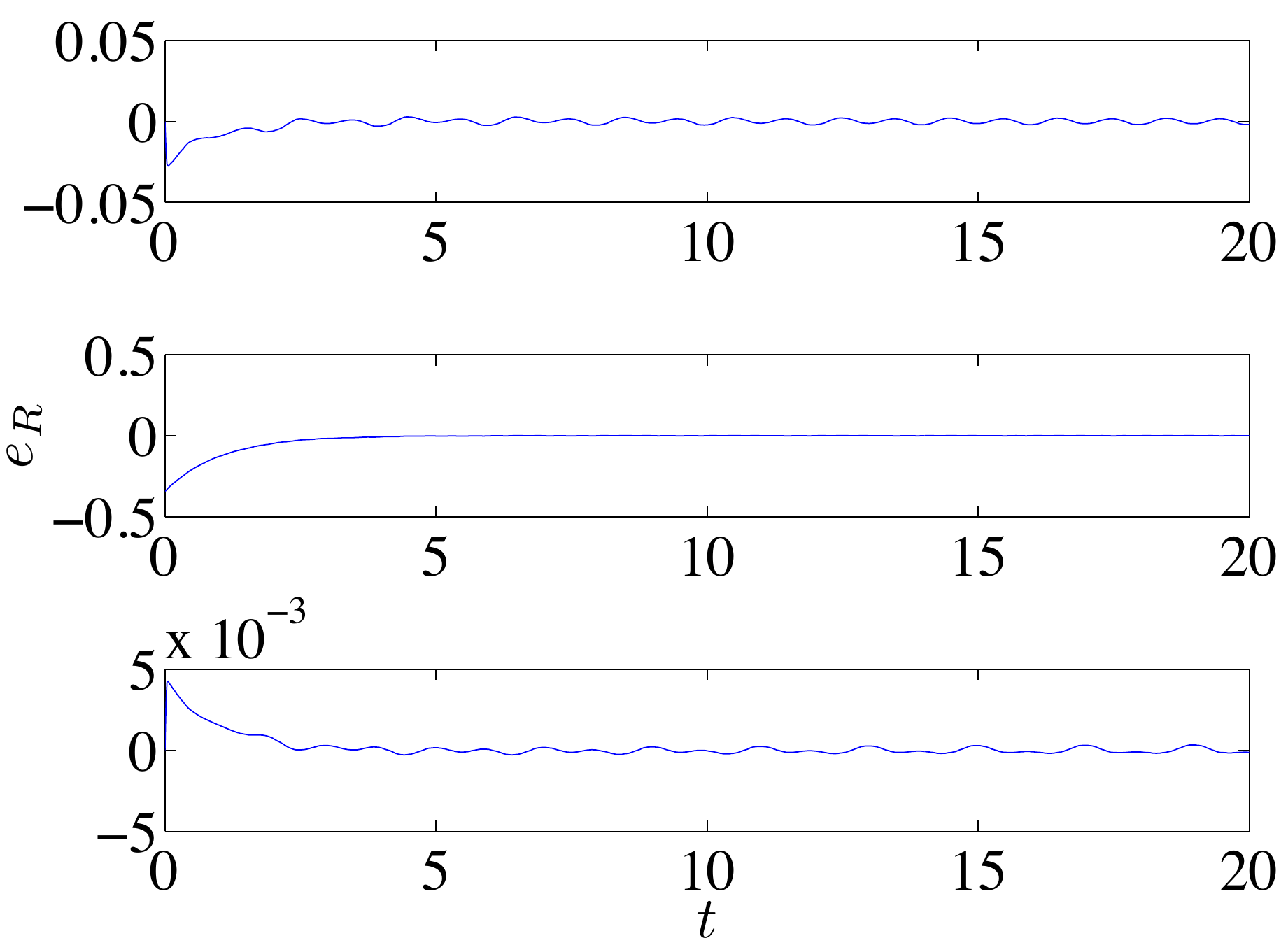}}
	\subfigure[Angular velocity ($\Omega$:blue, $\Omega_d$:red)]{
		\includegraphics[width=0.485\columnwidth]{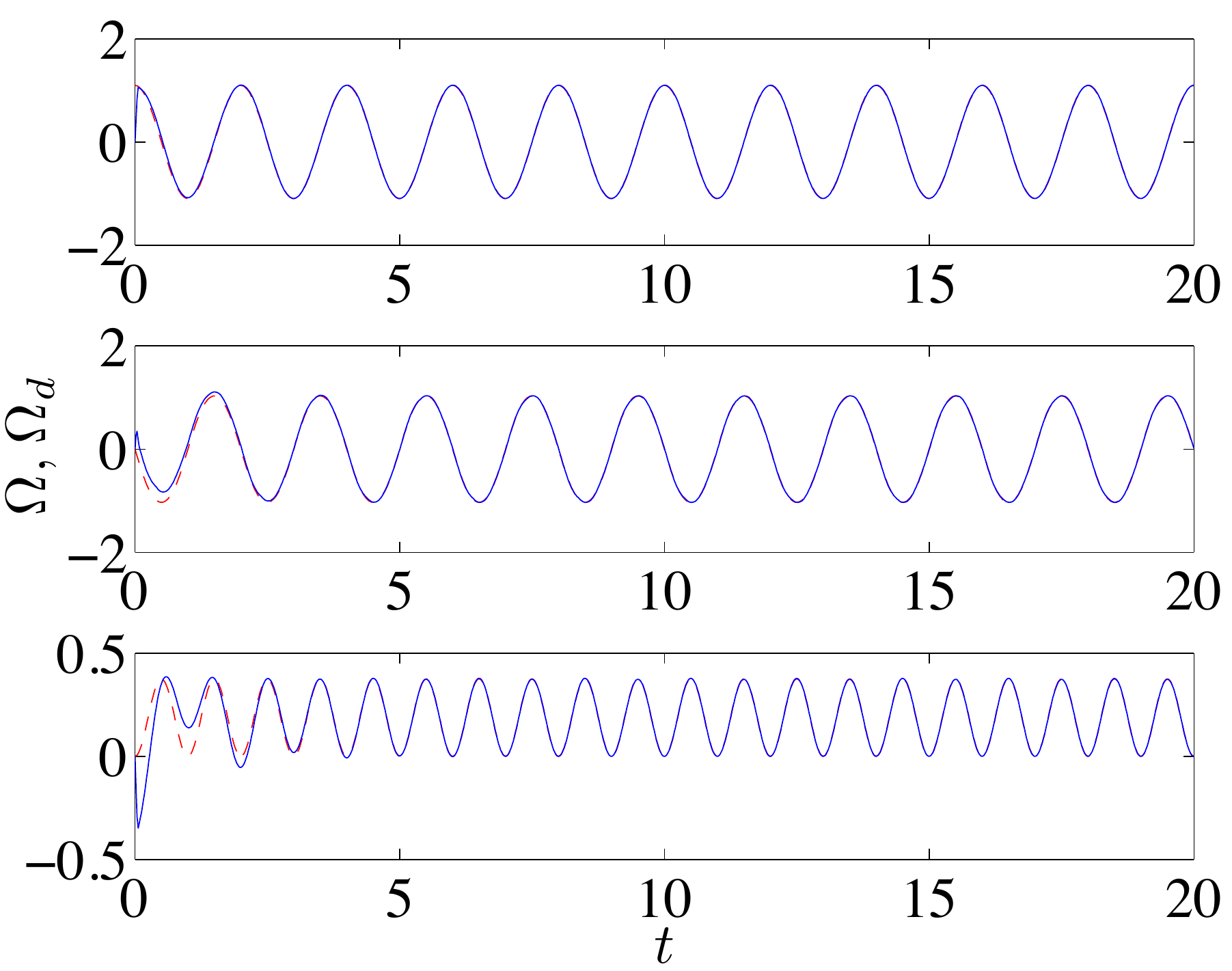}}
}
\centerline{
	\subfigure[Inertia estimate $\bar J$ ($\bar J_{11},\bar J_{12}$:solid, $\bar J_{22},\bar J_{13}$:dashed, $\bar J_{33},\bar J_{23}$:dotted)]{
		\includegraphics[width=0.51\columnwidth]{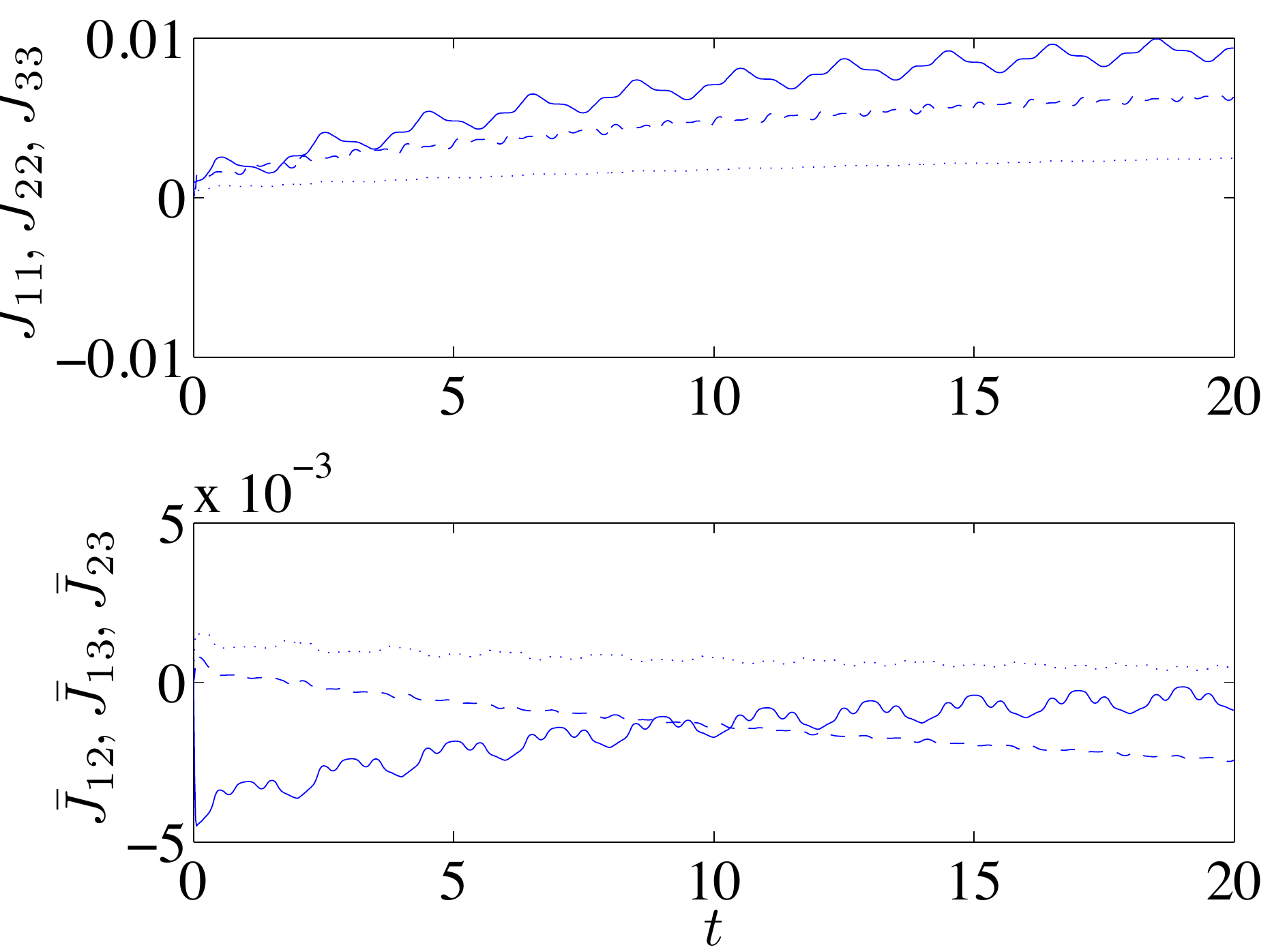}}
	\subfigure[Control input $u$]{
		\includegraphics[width=0.495\columnwidth]{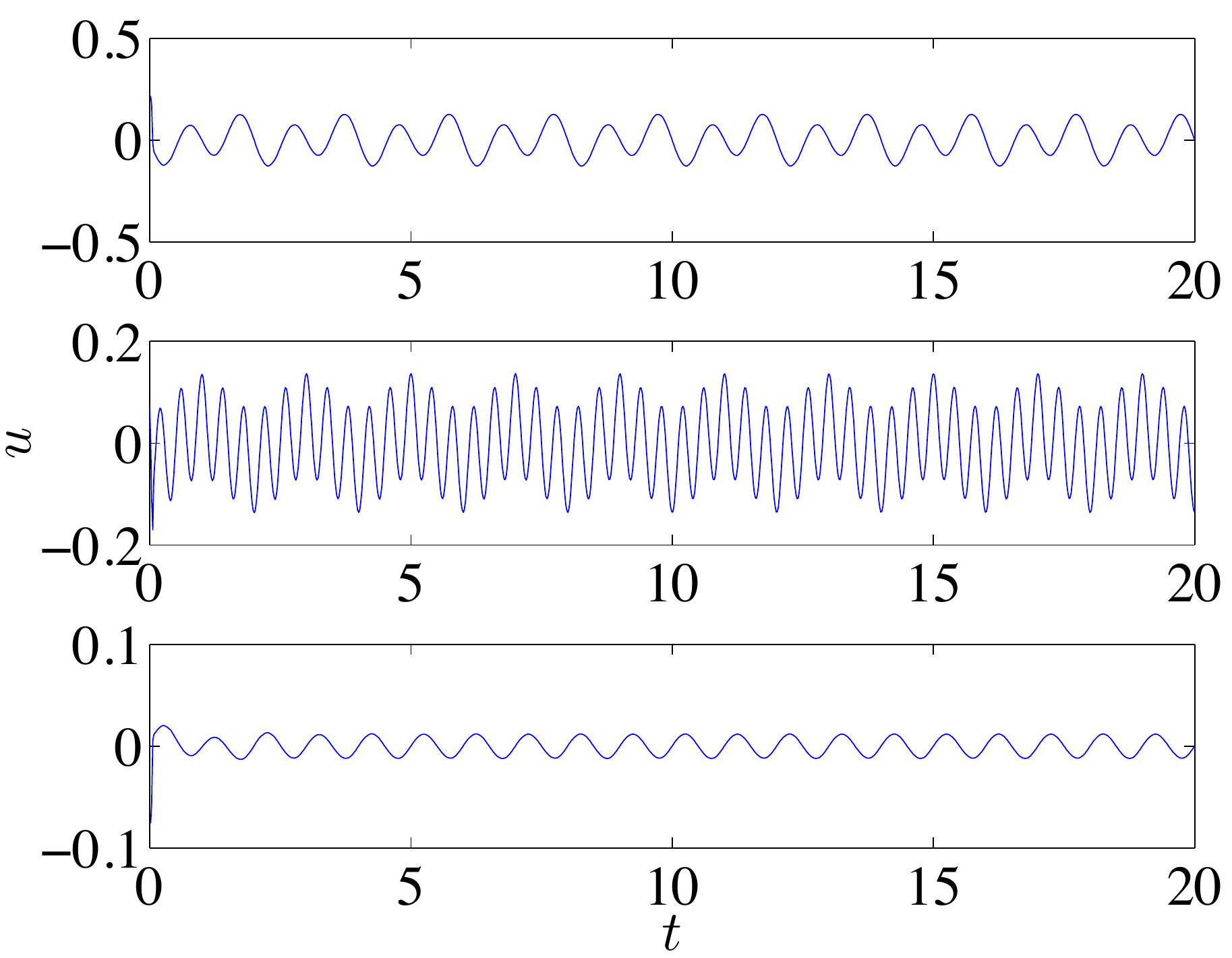}}
}
\caption{Robust adaptive attitude tracking with disturbances}\label{fig:3}
\end{figure}

Simulation results are illustrated at Figures \ref{fig:1}-\ref{fig:3}. When there is no disturbance, the adaptive attitude tracking control system presented at Proposition \ref{prop:AT} follows the given  attitude command accurately while making the estimate of the inertia matrix converge to a fixed matrix at \reffig{1}. But, these convergence properties are degraded in the presence of disturbances. At \reffig{2}, the tracking errors are not converged to zero asymptotically, and the estimate of the inertia matrix and control inputs fluctuate. These are significantly improved by the robust adaptive tracking controller discussed at Proposition \ref{prop:RAC}. At \reffig{3}, the tracking errors for the attitude and the angular velocity are close to zero, and the estimate of the inertia matrix is bounded. These show that the proposed robust adaptive approach is critical in following an attitude command in the presence of disturbances.

\section{Experiment on a Quadrotor UAV}

A quadrotor unmanned aerial vehicle (UAV) is composed of two pairs of counter-rotating rotors and propellers. Due to its simple mechanical structure, it has been envisaged for various applications such as surveillance or mobile sensor networks as well as for educational purposes.

\begin{figure}[b]
\centerline{
	\subfigure[Hardware configuration]{
\setlength{\unitlength}{0.1\columnwidth}\scriptsize
\begin{picture}(7,4)(0,0)
\put(0,0){\includegraphics[width=0.7\columnwidth]{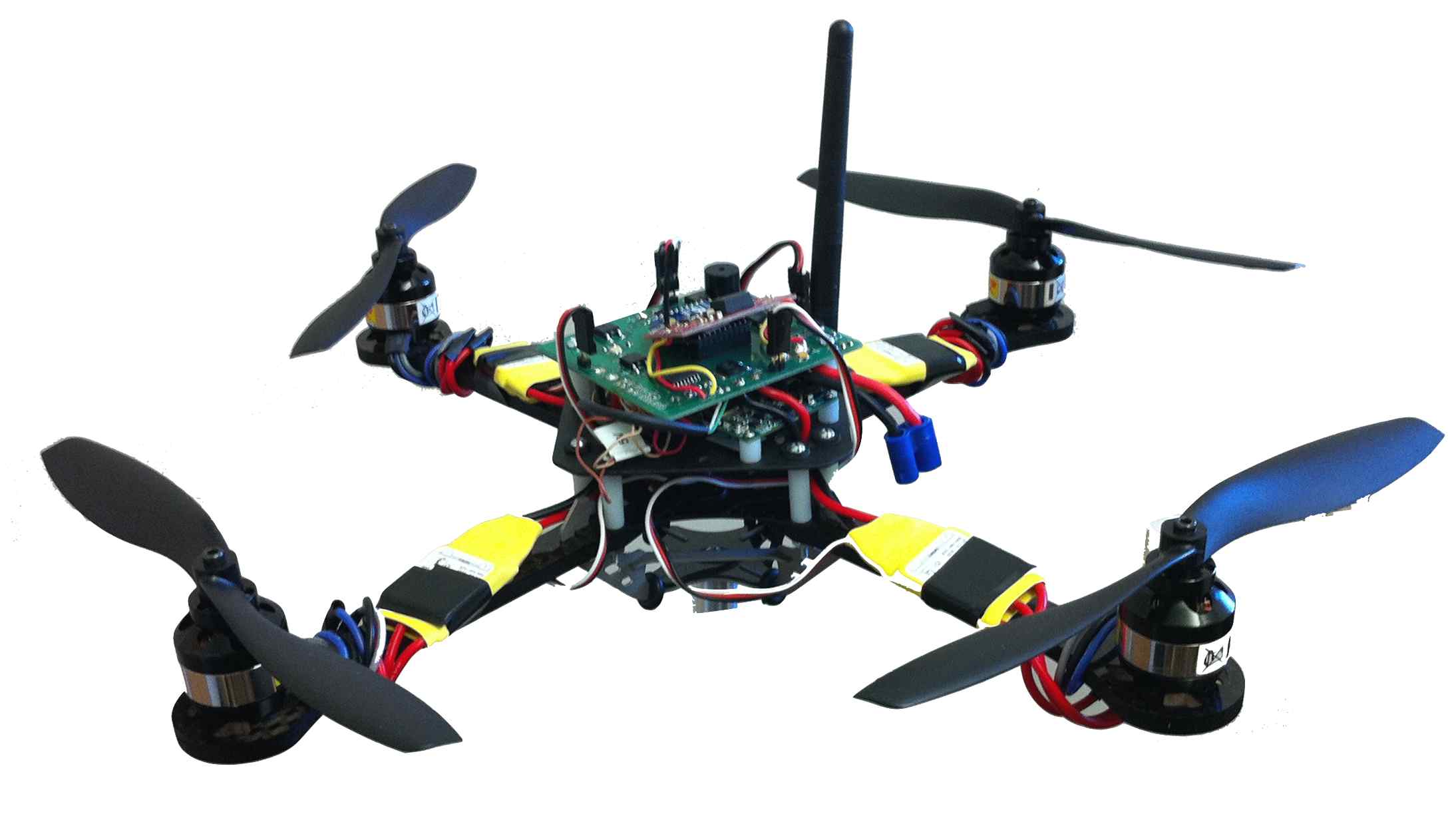}}
\put(1.95,3.2){\shortstack[c]{OMAP 600MHz\\Processor}}
\put(2.3,0){\shortstack[c]{Attitude sensor\\3DM-GX3\\ via UART}}
\put(0.85,1.4){\shortstack[c]{BLDC Motor\\ via I2C}}
\put(0.1,2.5){\shortstack[c]{Safety Switch\\XBee RF}}
\put(4.3,3.2){\shortstack[c]{WIFI to\\Ground Station}}
\put(5,2.0){\shortstack[c]{LiPo Battery\\11.1V, 2200mAh}}
\end{picture}}
	\subfigure[Attitude control testbed]{
	\includegraphics[width=0.27\columnwidth]{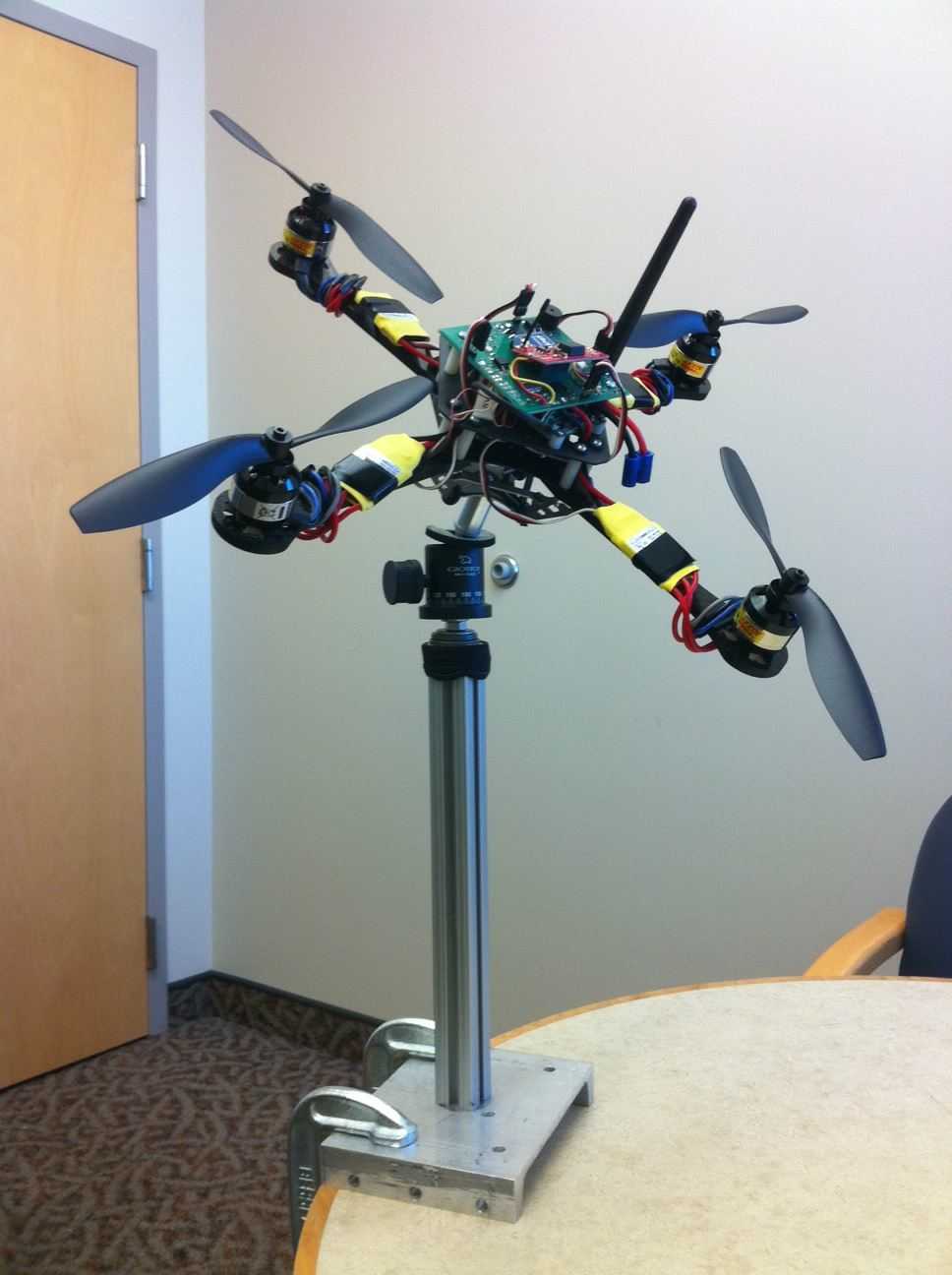}}
}
\caption{Attitude control experiment for a quadrotor UAV}
\end{figure}

We have developed a hardware system for a quadrotor UAV. It is composed of the following parts:
\begin{itemize}
\item Gumstix Overo computer-in-module (OMAP 600MHz processor), running a non-realtime Linux operating system. It is connected to a ground station via WIFI.
\item Microstrain 3DM-GX3 attitude sensor, connected to Gumstix via UART.
\item Phifun motor speed controller, connected to Gumstix via I2C.
\item Roxxy 2827-35 Brushless DC motors.
\item MaxStream XBee RF module, which is used for an extra safety switch.
\end{itemize}
To test the attitude dynamics only, it is attached to a spherical joint. As the center of rotation is below the center of gravity, there exists a destabilizing gravitational moment, and the resulting attitude dynamics is similar to an inverted rigid body pendulum. 

We apply the robust adaptive attitude control system at Proposition \ref{prop:RAC} to this quadrotor UAV. The control input at \refeqn{u2} is augmented with an additional term to eliminate the gravitational moment. The disturbances are mainly due to the error in canceling the gravitational moment, the friction in the spherical joint, as well as sensor noises and thrust measurement errors. 

The attitude tracking command and control input parameters are identical to the numerical examples discussed in the previous section, except the following variables:
\begin{gather*}
k_J=0.01,\quad \sigma=0.01,\quad \epsilon=0.35.
\end{gather*}

The corresponding experimental results are illustrated at \reffig{4}. Overall, it exhibits a good attitude command tracking performance, while the second component of the attitude error vector $e_R$, and the third component of the angular velocity tracking error are relatively large. The estimates of the inertia matrix are bounded (a video clip showing the controlled attitude maneuver is available at \href{http://my.fit.edu/\~taeyoung/Animation/QuadRAC.mov}{http://my.fit.edu/$\sim$taeyoung/Animation/QuadRAC.mov}).

\begin{figure}
\centerline{
	\subfigure[Attitude error vector $e_R$]{
		\includegraphics[width=0.505\columnwidth]{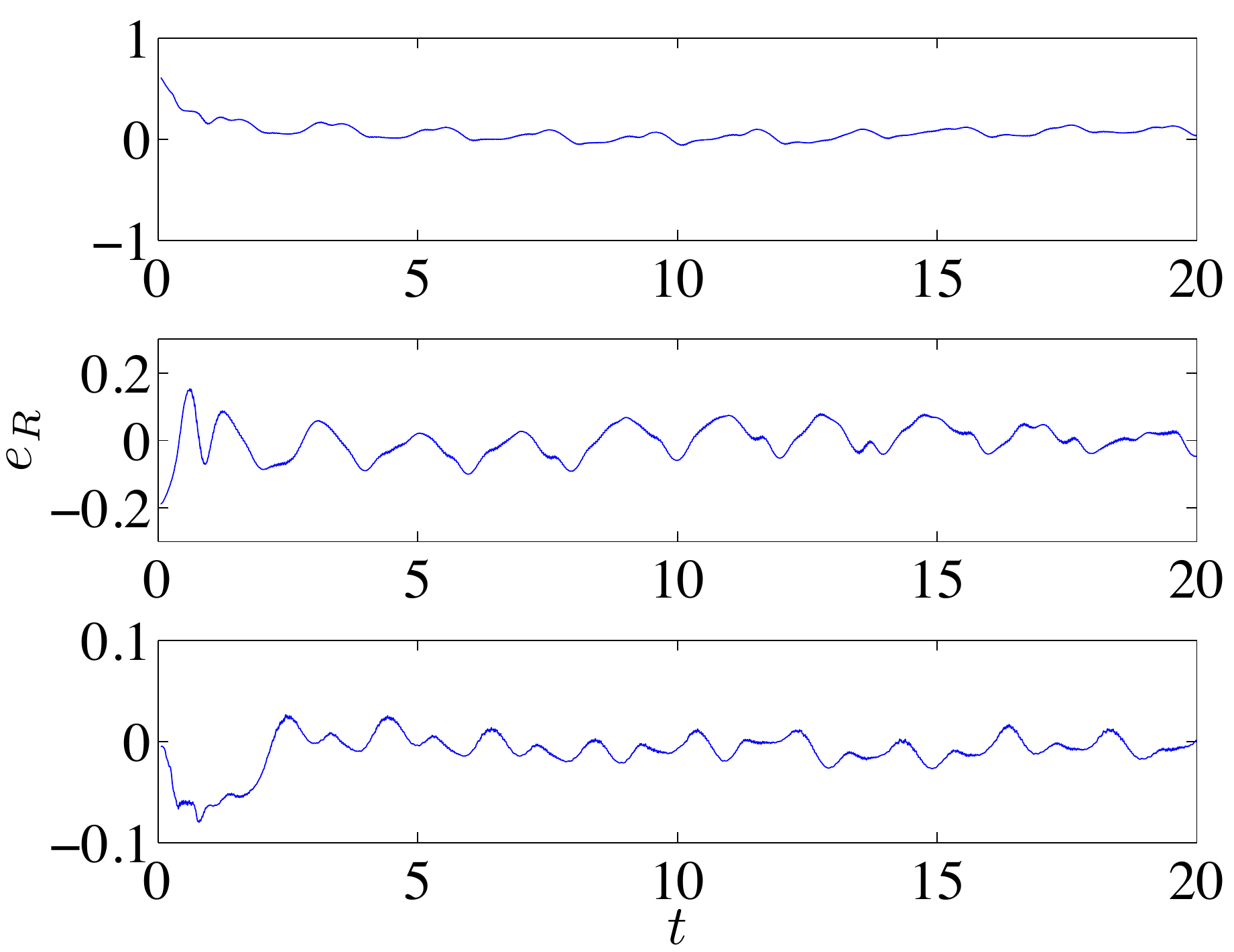}}
	\subfigure[Angular velocity ($\Omega$:blue, $\Omega_d$:red)]{
		\includegraphics[width=0.495\columnwidth]{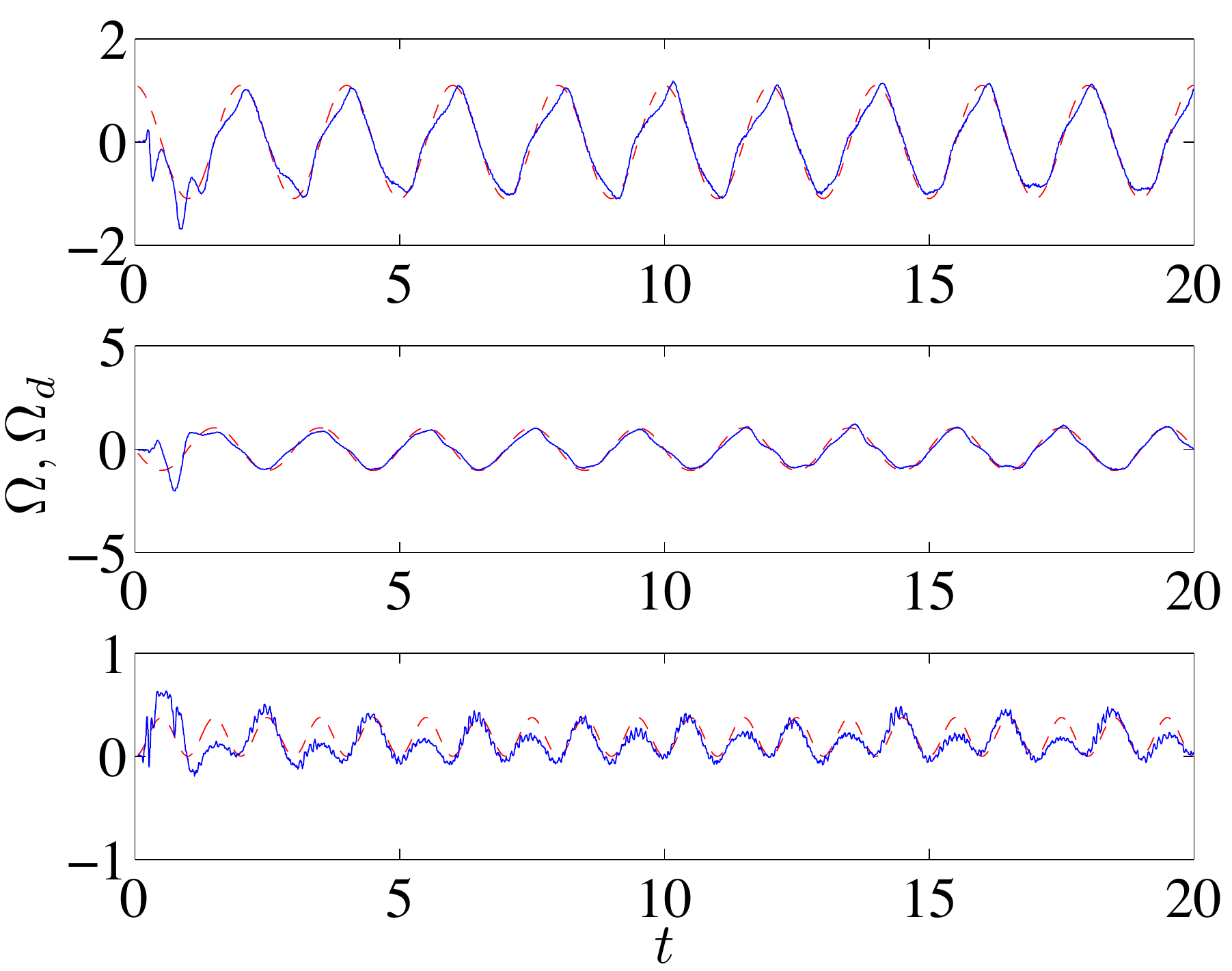}}
}
\centerline{
	\subfigure[Inertia estimate $\bar J$ ($\bar J_{11},\bar J_{12}$:solid, $\bar J_{22},\bar J_{13}$:dashed, $\bar J_{33},\bar J_{23}$:dotted ($\mathrm{kgm^2}$))]{
		\includegraphics[width=0.51\columnwidth]{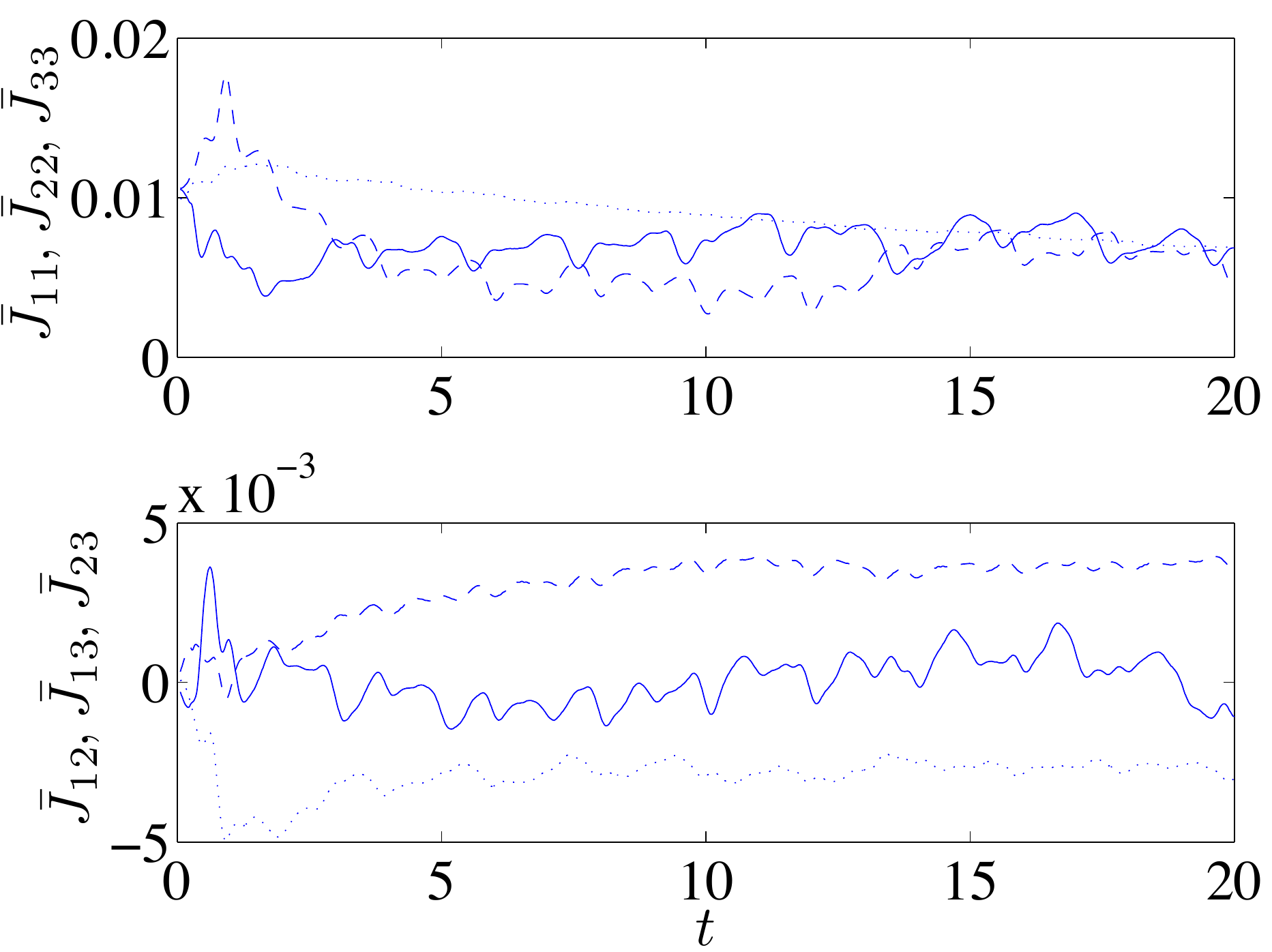}}
	\subfigure[Control input $u$]{
		\includegraphics[width=0.49\columnwidth]{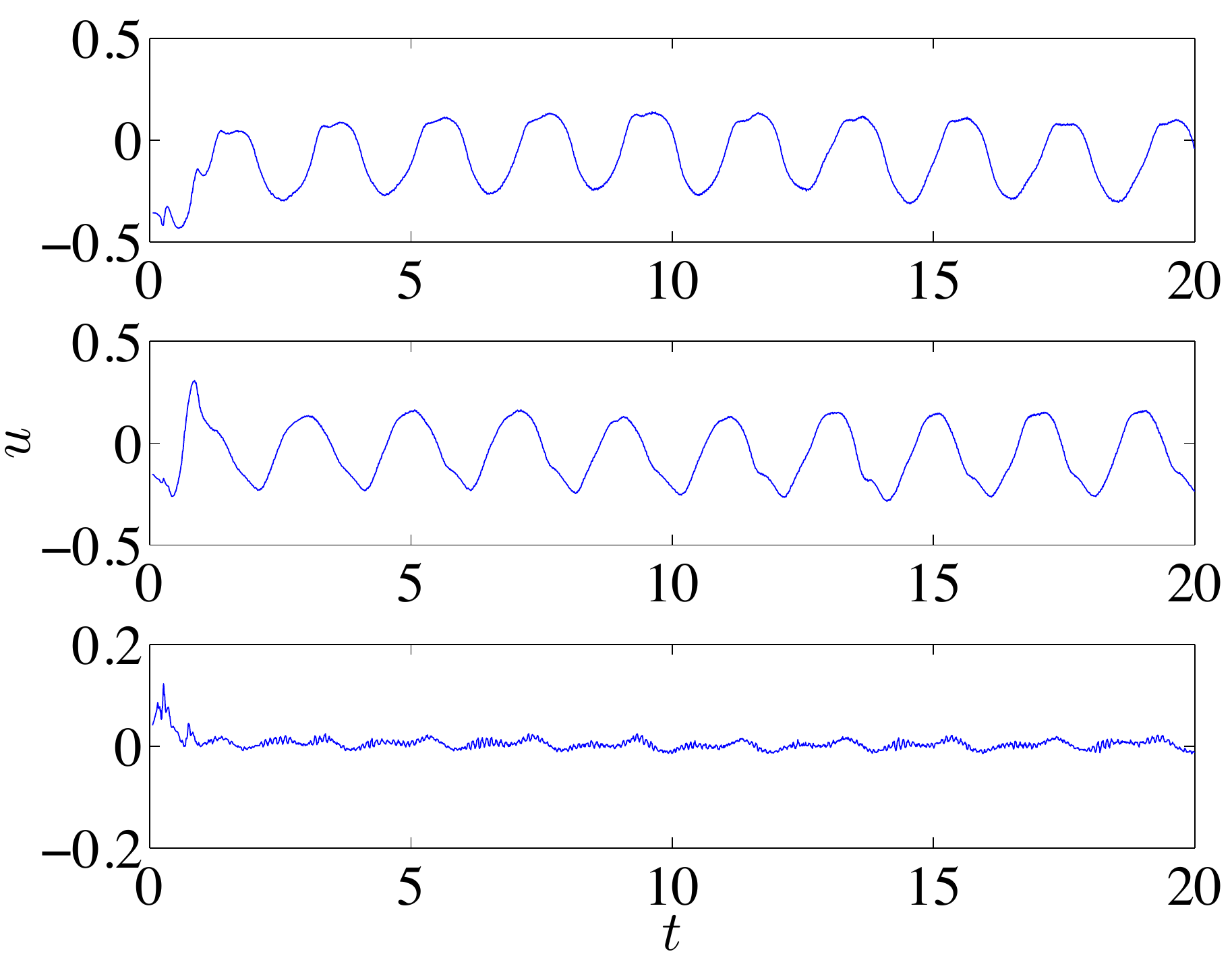}}
}
\caption{Robust adaptive attitude tracking experiment}\label{fig:4}
\end{figure}

\section{Conclusion}

We have developed adaptive tracking control systems on $\SO$. The proposed control system is constructed directly on $\SO$ to avoid singularities and ambiguities that are inherent to other attitude representations. A adaptive control system is developed to asymptotically follow a given attitude tracking command without the knowledge of an inertia matrix, in the absence of disturbances. A robust adaptive control system is proposed to eliminate the effects of disturbances. These properties are illustrated by numerical examples and a hardware experiment of the attitude dynamics of a quadrotor UAV.

\section*{Acknowledgment}
The author wishes to thank Thilina Fernando and Jiten Chandiramani for their help in the development of the presented quadrotor UAV hardware system.

\bibliography{CDC11.2}
\bibliographystyle{IEEEtran}

\end{document}